\documentclass[11pt,reqno]{amsart}
\usepackage{amsmath} 
\usepackage[T1]{fontenc}

\usepackage[numbers,sort&compress]{natbib}
\usepackage[pdfusetitle,colorlinks=true,
  pdfauthor={Hao Shen, Jiaqi Wang, and Lihong Zhi},
  linkcolor=blue!45!black,citecolor=blue!45!black,
  urlcolor=blue!45!black]{hyperref}
  \usepackage{amssymb}
  
\usepackage{mathtools}
\usepackage{xcolor}
\theoremstyle{plain}
\newtheorem{theorem}{Theorem}
\newtheorem{lemma}[theorem]{Lemma}
\newtheorem{proposition}[theorem]{Proposition}

\newcommand{\red}[1]{{#1}}
\newcommand{\blue}[1]{{#1}}
\newcommand{\DD}{\mathsf D}
\newcommand{\ord}{\operatorname{ord}}
\newcommand{\rank}{\operatorname{rank}}
\newcommand{\Frac}{\operatorname{Frac}}
\newcommand{\ab}{\mathrm{ab}}
\usepackage{listings}
\usepackage{textcomp}
\definecolor{keywordcolor}{rgb}{0.65,0.08,0.08}
\definecolor{symbolcolor}{rgb}{0.00,0.10,0.55}
\definecolor{sortcolor}{rgb}{0.05,0.42,0.05}
\definecolor{commentcolor}{rgb}{0.45,0.45,0.45}
\providecolor{keywordcolor}{rgb}{0.7,0.1,0.1}
\providecolor{tacticcolor}{rgb}{0.0,0.1,0.6}
\providecolor{commentcolor}{rgb}{0.4,0.4,0.4}
\providecolor{symbolcolor}{rgb}{0.0,0.1,0.6}
\providecolor{sortcolor}{rgb}{0.1,0.5,0.1}
\providecolor{attributecolor}{rgb}{0.7,0.1,0.1}
\providecolor{stringcolor}{rgb}{0.5,0.3,0.0}
\providecolor{errorcolor}{rgb}{1.0,0.0,0.0}

\lstnewenvironment{leancode}{\lstset{language=lean,abovecaptionskip=-\medskipamount}}{}
\newcommand{\Sf}{\mathcal S}
\newcommand{\Af}{\mathcal A}
\title{On the asymptotic Makar-Limanov rank conjecture}

    	\author[]{Sizhuo Yan }
	\address{ State Key Laboratory of Mathematical Sciences, Academy of Mathematics and Systems Science, Chinese Academy of Sciences, Beijing 100190, China }

	\email{yansizhuo@amss.ac.cn}

      	\author[]{Hao Shen}
	\address{ State Key Laboratory of Mathematical Sciences, Academy of Mathematics and Systems Science, Chinese Academy of Sciences, Beijing 100190, China } 
	\email{shenhao24@amss.ac.cn}

    	\author[]{Jianting Yang}
	\address{ School of Mathematical Sciences, University of Electronic Science and Technology of China, Chengdu 611731,  China} 
	\email{jtyangsdp@gmail.com} 

\date{29 September 2026}

\begin{document}
\maketitle
\begin{abstract}
Let $\Bbbk$ be an algebraically closed field of characteristic zero and let $f$ be a nonconstant polynomial in finitely many freely noncommuting variables. We prove the asymptotic Makar-Limanov rank conjecture, namely that the normalized rank of a value of $f$ can be made arbitrarily small by choosing matrices over $\Bbbk$ of finite size. 
\end{abstract}

\red{
\section{Introduction}\label{sec:introduction}
Let $\Bbbk$ be an algebraically closed field of characteristic zero, and let
$\Bbbk\langle x\rangle=\Bbbk\langle x_1,\ldots,x_d\rangle$ be the free unital associative algebra in a finite tuple of freely noncommuting variables. A free polynomial can be evaluated on tuples of matrices of any common size: the order of the letters is preserved, and a scalar $c$ is sent to $cI_n$. Studying these evaluations simultaneously over all matrix sizes leads to dimension-free matricial Nullstellens\"atze, which relate properties of matrix values to the algebraic structure of the free algebra. As surveyed by \citet{zbMATH08130993}, the resulting correspondence depends on whether one considers zero values, common kernels, singular matrix values, or vanishing traces. 

The trace condition illustrates the importance of choosing an appropriate notion of vanishing. \citet[Corollaries~4.7 and~4.8]{zbMATH05613896} show that a free polynomial has trace zero on all tuples of a fixed matrix size $n\geq2$ precisely when it is a sum of commutators and a polynomial identity for that size; vanishing trace at every size is equivalent to being a sum of commutators. In contrast, even a nonconstant free polynomial need not have a matrix value equal to zero. Indeed, with $[a,b]=ab-ba$, the polynomial $1-[x_1,x_2]$ has no such value in characteristic zero, since
\[
 \operatorname{tr}(I_n-[X,Y])=n\ne0
 \qquad\bigl(X,Y\in M_n(\Bbbk)\bigr).
\]
Thus exact matrix zeros alone cannot play the same role as scalar zeros of commutative polynomials over an algebraically closed field.

The obstruction disappears for this example if vanishing is measured by normalized rank. Let $E_{ij}$ denote the standard matrix units and set
\[
 X_n=\sum_{j=1}^{n-1}jE_{j,j+1},\qquad
 Y_n=\sum_{j=1}^{n-1}E_{j+1,j}.
\]
\blue{Direct calculation gives}
\[
 I_n-[X_n,Y_n]=nE_{nn},\qquad
 \frac{\rank_{\Bbbk}(I_n-[X_n,Y_n])}{n}=\frac1n.
\]
Consequently the normalized rank tends to zero although an exact matrix zero is impossible. This example is discussed in \citet[Section~6]{zbMATH08130993} as motivation for replacing exact zeros by values of arbitrarily small normalized rank.

\blue{Related earlier work includes} Makar-Limanov's construction of algebraically closed skew fields \citep{zbMATH03889737}. That work develops a setting for solving noncommutative polynomial equations and applies it to the Freiheitssatz for associative algebras. \blue{Makar-Limanov conjectured that every nonconstant free polynomial has matrix values of arbitrarily small normalized rank.} \citet[Conjecture~6.1]{zbMATH08130993} records this assertion, attributing it to a 2017 talk at Bar-Ilan University and noting that it may have been proposed earlier. The purpose of this paper is to prove this asymptotic Makar-Limanov rank conjecture.

\begin{theorem}\label{thm:main}
For every algebraically closed field $\Bbbk$ of characteristic zero, every finite tuple $x=(x_1,\ldots,x_d)$ of freely noncommuting variables, and every
$f\in\Bbbk\langle x\rangle\setminus\Bbbk$, define
\[
 r_f(n)=\min_{Z_1,\ldots,Z_d\in M_n(\Bbbk)}\rank_{\Bbbk} f(Z_1,\ldots,Z_d)
 \qquad(n\geq1).
\]
Then
\[
 \inf_{n\geq1}\frac{r_f(n)}n=0.
\]
\end{theorem}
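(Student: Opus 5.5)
The plan is to produce, for each nonconstant $f$, an infinite-dimensional representation of $\Bbbk\langle x\rangle$ in which $f$ acts as zero. The representation should be built from \emph{band operators} on a graded vector space, so that it can be truncated to finite matrices at the cost of a rank defect that is negligible relative to the dimension. The example $I_n-[X_n,Y_n]=nE_{nn}$ in the introduction is exactly of this type: it is the truncation of the Fock representation $x_1\mapsto \partial_t$, $x_2\mapsto t$ of the Weyl algebra $A_1$ on $\Bbbk[t]$, where $1-[x_1,x_2]$ acts as $0$. The defect $nE_{nn}$ comes only from the boundary of the truncation.

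\textbf{Step 1 (truncation lemma).} Let $V=\Bbbk[t_1,\dots,t_m]$, and let $V_N$ be the polynomials of degree $\le N$, with $\dim V_N=\binom{N+m}{m}\sim N^m/m!$. Let $T_1,\dots,T_d$ be elements of the Weyl algebra $A_m$ of total order $\le c$ in the Fock representation. Then each $T_i$ maps $V_N$ into $V_{N+c}$. Put $Z_i=P_NT_i|_{V_N}$, where $P_N$ is the projection onto $V_N$ along the higher monomials. I would show, by induction on the length of a word $w$ of degree $\le D$, that $w(Z)$ and $P_N w(T)|_{V_N}$ agree on $V_{N-cD}$. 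Hence
\[
\rank\bigl(f(Z)-P_Nf(T)|_{V_N}\bigr)\le \dim V_N-\dim V_{N-cD}=O(N^{m-1}).
\]
So if $f(T)=0$ in $A_m$, then $r_f(\dim V_N)/\dim V_N=O(1/N)\to0$. This step is routine bookkeeping.

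\textbf{Step 2 (reduction).} Write $f=\lambda+g$ with $g(0)=0$. If $\lambda=0$, take $Z=0$. If the abelianization $f^{\ab}\in\Bbbk[x]$ is nonconstant, take scalar matrices at a zero of $f^{\ab}$, using that $\Bbbk$ is algebraically closed. The remaining case is that $f^{\ab}$ is a nonzero constant, as for $1-[x_1,x_2]$. Here I would look for $T_i\in A_m$ with $g(T)=-\lambda$.

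\textbf{Step 3 (main obstacle: zeros in Weyl algebras).} I expect the main difficulty to be showing that every nonconstant $f$ has a zero in some $A_m$, or in $M_k(A_m)$; the latter is harmless for Step 1, since it only multiplies dimensions by $k$. My first attempt would use a filtration and a leading-term argument. Choose substitutions $x_i\mapsto T_i=a_i+s_i u_i$, with scalars $a_i,s_i$ and elements $u_i$ built from the $t_j$ and $\partial_j$. The aim is to make the associated graded of $f(T)$, a commutative polynomial in the symbols, vanish identically, and then kill lower-order terms degree by degree.

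To do this, I would pick a homogeneous component of $f$ that survives after passing to the commutator filtration. Such a component exists because $f$ is nonconstant, and if $f^{\ab}$ is constant, some component involving commutators must be nonzero. I would then solve for the parameters using algebraic closedness, dimension counting as $m$ grows, and freedom in the choice of the $u_i$, for example $u_i$ ranging over generic linear combinations of $t_j,\partial_j$.

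If this direct approach stalls, the fallback is the softer statement. By Makar-Limanov's algebraically closed skew fields, $f$ has a zero in some skew field, so it suffices to show that the relevant skew field embeds, in the rank metric, into an ultraproduct of the $M_n(\Bbbk)$. This is a sofic-type approximation. For fields built from Weyl-algebra-like Ore extensions I would again establish it via Step 1, applied to Ore localizations and using the Fock-type truncations.
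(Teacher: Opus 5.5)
Your Steps 1 and 2 are sound. Step 1 is a valid one-sided version of the truncation principle behind Proposition~\ref{prop:compression}, and Step 2 matches Lemma~\ref{lem:reduction}. The gap is Step 3, which carries the whole theorem but is only announced, not proved.

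As you formulate it, Step 3 asks for an \emph{exact} zero of $f$ in some $A_m$ or $M_k(A_m)$, and for $A_m$ this is false. Take $f=1+[x,y]x$: it is nonconstant with $f^{\ab}=1$. A zero would give $\bigl(-[X,Y]\bigr)X=1$. Since $\mathrm{gr}\,A_m$ is a polynomial domain, degrees add, so $X\in\Bbbk^\times$. Then $[X,Y]=0$, which is a contradiction. For $M_k(A_m)$ you give no argument.

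Your proposed mechanism (match a leading term, then kill lower-order terms degree by degree) is a Newton--Puiseux iteration, and it does not stay inside a Weyl algebra:
\begin{itemize}
\item the slopes have denominators, which forces fractional powers $\DD^{r}$ with $r\in p^{-1}\mathbb Z$;
\item the leading coefficients must satisfy nonlinear differential equations whose solutions need new transcendental coefficients carrying two commuting derivations (Lemma~\ref{lem:branch});
\item the corrections have strictly decreasing order and in general need not terminate, so any limit lives in a completed pseudo-differential symbol algebra.
\end{itemize}
Even termination at a prescribed finite precision needs proof. The paper gets it from the root-of-unity orbit estimate (Lemma~\ref{lem:orbit} with \eqref{eq:denominator}). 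That estimate keeps every lattice denominator at most $M$, so the residual order drops by at least $1/M$ per correction.

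The idea you are missing is that no exact zero is needed. Suppose $h(\DD,Z_T)$ has order at most $-T$. Then its band-matrix image has nonzero entries only where the first index drops by at least $p_TT$. So it compresses to $0$ on a window of width $p_TS$ with $S\leq T$, by \eqref{eq:residualzero}. Only boundary rows and columns survive, giving rank at most $4DRpS$ at size $pS^2$.

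Making this work requires three further ingredients:
\begin{itemize}
\item realizing symbols with fractional exponents and differential-field coefficients as two-index band matrices, via the Taylor map and normal ordering (Lemma~\ref{lem:realization});
\item cutting off boundary strips on both the input and output sides, rather than your one-sided Fock truncation;
\item a Nullstellensatz descent of the rank condition from $\mathbb F_T$ to $\Bbbk$ (Lemma~\ref{lem:descent}).
\end{itemize}

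Your fallback does not close the gap either. Makar-Limanov's zeros live in a completed skew field of formal pseudo-differential series, not in an Ore localization of a Weyl algebra. Showing that such a skew field is approximable in the rank metric by the algebras $M_n(\Bbbk)$ is essentially the theorem itself.
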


For each $n$, the minimum exists because the set of possible ranks is a nonempty subset of $\{0,\ldots,n\}$; the ratios and their infimum are taken in $\mathbb R$. Equivalently, for every real $\varepsilon>0$ there are a positive integer $n$ and matrices $Z_1,\ldots,Z_d\in M_n(\Bbbk)$ such that $\rank_{\Bbbk}f(Z_1,\ldots,Z_d)<\varepsilon n$. The theorem establishes exactly the assertion of \citet[Conjecture~6.1]{zbMATH08130993}. The conclusion permits positive rank at every finite size.  A separate question in \citet[Section~6]{zbMATH08130993} asks whether $r_f(n)\leq1$ whenever $f$ is not constant on $M_n(\Bbbk)^d$.

\blue{
  Inspired by \citep[Example~3.2]{zbMATH07292439}, we construct a counterexample already for $n=3$. Set
\[
 p(x,y)=[x,[y,x][x,[y,x]^2]],\qquad f(x,y)=1+p(x,y).
\]
For arbitrary $X,Y\in M_3(\Bbbk)$, write $P=p(X,Y)$. Then
\[
 \operatorname{tr}(P)=\operatorname{tr}(P^2)=0.
\]
The first identity holds because $p$ is a commutator. The second can be verified by a symbolic computation\footnote{\blue{The  source code used for this symbolic verification is available at \url{https://github.com/MechMath/MakarLimanov-formalization/blob/main}.}}, which implies  $P^3=(\det P)I_3$. Thus the eigenvalues of $P$ are $c,c\omega,c\omega^2$ for some $c\in\Bbbk$, where $\omega$ is a primitive cube root of unity. At most one of these eigenvalues equals $-1$, so $\rank(I_3+P)\geq2$.

Moreover, take
\[
 X_0=\begin{pmatrix}0&0&0\\0&\tfrac12&0\\0&0&1\end{pmatrix},\qquad
 Y_0=\begin{pmatrix}0&2&2\\2&0&4\\2&2&0\end{pmatrix}.
\]
Direct calculation yields
\[
 p(X_0,Y_0)=\begin{pmatrix}0&1&-1\\1&0&-1\\2&-1&0\end{pmatrix},\qquad
 \rank\bigl(I_3+p(X_0,Y_0)\bigr)=2.
\]
Furthermore, by $p(0,0)=0$, the polynomial $f$ is not constant on $M_3(\Bbbk)^2$, which shows $r_f(3)=2$.
}

\blue{The related correspondence between rank inequalities and membership in two-sided ideals is formulated in \citet[Conjecture~6.2]{zbMATH08130993}. \citet{arXiv:2610.01804} prove this conjecture when the generators are homogeneous and give a counterexample in the nonhomogeneous case.}

The proof proceeds by constructing solutions to increasing finite precision and then converting them into matrix values. Section~\ref{sec:reduction} reduces the problem, unless a scalar zero already exists, to a two-variable polynomial $h=1+g$ whose nonconstant part has zero abelianization. Section~\ref{sec:symbol} introduces a formal symbol algebra with two commuting coefficient derivations and proves the nonvanishing and variation estimates needed for Newton corrections. Section~\ref{sec:branch} realizes the differential equations for the leading coefficients while preserving specified inequations. In Section~\ref{sec:newton}, an orbit estimate for irreducible factors under roots of unity bounds the denominators of the correction exponents and ensures termination at every prescribed precision. Section~\ref{sec:matrix} uses a Taylor map and normal ordering to realize the symbols as infinite matrices with controlled support; finite compression confines the error to boundary rows and columns. Finally, Section~\ref{sec:final} descends the resulting finite rank conditions to $\Bbbk$ and completes the proof.

 Throughout the paper, the degree of a word is its length; in particular, the empty word $1$ has degree zero.
}

\section{Reduction to two generators}\label{sec:reduction}
For a free polynomial $f$, write $f^{\ab}$ for its image in the commutative polynomial ring under abelianization.

\begin{lemma}\label{lem:reduction}
Let ${\red{\Bbbk}},d,f$ be as in Theorem~\ref{thm:main}, with $d\geq1$. Either $f$ has a scalar zero, or there are $c\in {\red{\Bbbk}}^\times$ and a nonconstant polynomial
\[
 h=c^{-1}f(xy,xy^2,\ldots,xy^d)=1+g\in {\red{\Bbbk}}\langle x,y\rangle,
 \qquad g\ne0,\quad g^{\ab}=0.
\]
The substitution $x_i\mapsto xy^i$ is injective. Every word in the support of $g$ contains both letters. If $D=\deg h$ and $A,M$ are the maximum numbers of $x$'s and $y$'s, respectively, in such a word, then
\[
 1\leq A,M\leq D-1.
\]
For all $X,Y\in M_n({\red{\Bbbk}})$,
\[
 \rank h(X,Y)=\rank f(XY,XY^2,\ldots,XY^d).
\]
\end{lemma}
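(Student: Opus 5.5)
The plan is to split according to whether the abelianization $f^{\ab}\in\Bbbk[x_1,\ldots,x_d]$ is constant, and in the constant case to read off every remaining claim from the injectivity of the substitution $x_i\mapsto xy^i$ on words.

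\emph{The scalar-zero case.} Evaluating $f$ at a point of $\Bbbk^d$, viewed as a tuple of $1\times 1$ matrices, is the same as evaluating $f^{\ab}$ there. If $f^{\ab}$ is nonconstant, it has a zero in $\Bbbk^d$ because $\Bbbk$ is algebraically closed, so $f$ has a scalar zero. If $f^{\ab}=c$ is constant and $c=0$, every point of $\Bbbk^d$ is a scalar zero. In the remaining case $f^{\ab}=c\in\Bbbk^\times$. Abelianization preserves the constant term, so $c$ is the constant term of $f$. All nonconstant words of $f$ then cancel after abelianization, that is, $(f-c)^{\ab}=0$.

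\emph{Injectivity on words.} I will show that the substitution sends distinct words to distinct words. A word $x_{i_1}\cdots x_{i_k}$ is sent to
\[
 xy^{i_1}xy^{i_2}\cdots xy^{i_k}.
\]
Since every $i_j\geq1$, the image word can be split uniquely at its occurrences of $x$. The number of $x$'s recovers $k$, and the lengths of the $y$-blocks recover $i_1,\ldots,i_k$. Hence distinct words have distinct images, and the induced linear map $\Bbbk\langle x_1,\ldots,x_d\rangle\to\Bbbk\langle x,y\rangle$ is injective; it is also an algebra homomorphism.

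\emph{Properties of $h$ and $g$.} Put $h=c^{-1}f(xy,\ldots,xy^d)$, so that $h=1+g$ with $g=c^{-1}(f-c)(xy,\ldots,xy^d)$. Since $f\notin\Bbbk$, we have $f-c\neq0$, and injectivity gives $g\neq0$; thus $h$ is nonconstant. Abelianization commutes with substitution, so $g^{\ab}=c^{-1}(f-c)^{\ab}(xy,\ldots,xy^d)=0$. By injectivity there is no cancellation among the images of the words of $f-c$. Therefore the support of $g$ is exactly the set of images of the nonempty words in the support of $f$. Each such image begins with $x$ and contains at least one $y$, so it contains both letters.

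\emph{The degree bounds.} An image word with $k\geq1$ letters $x$ has $\sum_j i_j\geq1$ letters $y$, and its length is $k+\sum_j i_j$. Because $g\neq0$, the support is nonempty, which gives $A,M\geq1$. The constant $1$ has degree $0$, so $D$ is the maximal length of a word in the support of $g$. In any such word of length $\ell\leq D$, the number of $x$'s is at most $\ell-1\leq D-1$, since at least one letter is $y$. Symmetrically, the number of $y$'s is at most $D-1$. Taking maxima gives $A,M\leq D-1$.

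\emph{The rank identity.} The substitution is an algebra homomorphism, so evaluation commutes with it. Hence
\[
 h(X,Y)=c^{-1}f(XY,XY^2,\ldots,XY^d)
\]
for all $X,Y\in M_n(\Bbbk)$. Multiplying by the nonzero scalar $c^{-1}$ does not change rank.

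None of these steps is a real obstacle. The only point needing care is the unique decodability argument, which requires every exponent $i\geq1$. It is this fact that makes the support of $g$ the exact image of the nonconstant support of $f$, free of cancellation, and this is what yields both $g\neq0$ and the degree bounds.
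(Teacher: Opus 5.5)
Your proposal is correct and follows essentially the same route as the paper. You split on $f^{\ab}$, decode $xy^{i_1}\cdots xy^{i_\ell}$ by its $x$-blocks to get injectivity without cancellation, and then read off $g\ne0$, $g^{\ab}=0$, the letter counts, the degree bounds and the rank identity. The only difference is that the paper spells out why a nonconstant $f^{\ab}$ has a zero in $\Bbbk^d$: it specializes all but one variable so that the leading coefficient stays nonzero, using that $\Bbbk$ is infinite. You assert this step directly.
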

\begin{proof}
The field is infinite, since characteristic zero makes its nonnegative integer multiples of $1$ distinct. A nonzero univariate polynomial of degree $m$ has at most $m$ roots: division by $T-a$ at a root and induction on degree prove this. Consequently every nonzero polynomial over ${\red{\Bbbk}}$ in finitely many variables has a nonzero value. Indeed, write it as a polynomial in the last variable, choose the other variables so that its nonzero leading coefficient remains nonzero by induction, and then choose the last variable outside the finite root set. The induction starts with a nonzero constant in zero variables.

Put $F=f^{\ab}$. Scalar evaluation of each word agrees with evaluation of its abelianization, so $f(a)=F(a)$ for $a\in {\red{\Bbbk}}^d$. If $F=0$, take $a=0$. If $F$ is nonconstant, choose a variable of positive degree and specialize the others so its leading coefficient is nonzero, using the preceding fact. The resulting nonconstant univariate polynomial has a root by algebraic closure. This gives a scalar zero of $f$. It also gives a zero value in every matrix size by substituting $a_iI_n$.

The remaining case is $F=c\ne0$. Define $\phi(x_i)=xy^i$. The image of a nonempty word $x_{i_1}\cdots x_{i_\ell}$ is
$xy^{i_1}\cdots xy^{i_\ell}$. Each $x$ starts a block, and the lengths of the following positive runs of $y$ recover the ordered indices $i_1,\ldots,i_\ell$. Distinct source words therefore have distinct images, and no nonempty word maps to $1$. Linear independence of words proves injectivity.

The commutative substitution $t_i\mapsto uv^i$ commutes with abelianization, as is checked on every word. Hence $\phi(f)^{\ab}=c$. Abelianization preserves the constant coefficient, so $f-c$ is a nonzero polynomial with zero constant coefficient. Set $g=c^{-1}\phi(f-c)$ and $h=c^{-1}\phi(f)$. Then $g\ne0$, $h=1+g$, and $g^{\ab}=0$. Every image word of $f-c$ has at least one $x$ and at least one $y$, and injectivity prevents cancellation between distinct words. Its length is the sum of these two positive counts. Taking maxima over the finite nonempty support proves the asserted bounds. In particular $D$ is finite; explicitly $D\leq(d+1)\deg f$.

Evaluation of a word after $\phi$ is its evaluation on $(XY,XY^2,\ldots,XY^d)$. Thus $h(X,Y)=c^{-1}f(XY,\ldots,XY^d)$. Multiplication of a linear map by a nonzero scalar preserves its image dimension, giving the rank identity. This transfers witnesses and requires no surjectivity of the substitution on matrix tuples.
\end{proof}
For the symbolic construction fix such a normalized $h=1+g$, and retain $D,A,M$. Write $g=\sum_{k=1}^M h_k$, where $h_k$ is the sum of words containing exactly $k$ copies of $y$; some $h_k$ may be zero, but $h_M\ne0$.

\section{The symbol algebra and its variations}\label{sec:symbol}
Let ${\red{\mathbb{F}}}/{\red{\Bbbk}}$ be a field equipped with commuting derivations $\delta,\eta$ that vanish on ${\red{\Bbbk}}$. For an integer $p\geq1$, put $L_p=p^{-1}\mathbb Z$. Define $\Sf_p({\red{\mathbb{F}}})$ to consist of formal sums
\[
 U=\sum_{r\in L_p}a_r\DD^r
\]
whose support is bounded above. Set $\ord0=-\infty$ and otherwise let $\ord U$ be the largest occupied exponent. Initially juxtaposition denotes the ordinary commutative multiplication of these series. Define
\[
 \Delta(a\DD^r)=(ra+\eta(a))\DD^{r-1},\qquad
 \delta(a\DD^r)=\delta(a)\DD^r,
 \qquad
 U\star V=\sum_{j\geq0}\frac{\Delta^j(U)\delta^j(V)}{j!}.
 \tag{3.1}\label{eq:star}
\]
Extend $\eta$ coefficientwise as well. A differential indeterminate $v$ over ${\red{\mathbb{F}}}$ means the field ${\red{\mathbb{F}}}(v_{ij}:i,j\geq0)$ on algebraically independent variables, with $v=v_{00}$, $\delta v_{ij}=v_{i+1,j}$ and $\eta v_{ij}=v_{i,j+1}$. These rules commute and extend to fractions by the quotient rule. We write ${\red{\mathbb{F}}}\{v\}={\red{\mathbb{F}}}[v_{ij}:i,j\geq0]$. Ordinary degree in the jets and total differential order $\max(i+j)$ are different notions.

\begin{lemma}\label{lem:algebra}
Formula~\eqref{eq:star} defines a unital associative ${\red{\Bbbk}}$-algebra, with ${\red{\Bbbk}}$ central and
$\ord(U\star V)\leq\ord U+\ord V$. The coefficient derivations act as derivations for $\star$, and
$[\DD,U]_\star=\delta U$. Compatible differential-field embeddings and inclusions into a multiple denominator lattice preserve the operations and orders.
\end{lemma}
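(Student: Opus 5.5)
The plan is to recognize $\star$ as a Moyal/Weyl-type normal-ordered product and to prove associativity through a generating-function identity rather than by direct coefficient comparison. First we check well-definedness. For $U$ with support bounded above by $\ord U$, the term $\Delta^j(U)\delta^j(V)$ has support bounded above by $\ord U+\ord V-j$. The ordinary product of two series with supports bounded above in $L_p$ is a finite sum in each exponent. Hence for fixed exponent $s$ only the indices $j\le \ord U+\ord V-s$ contribute, each contribution is finite, and the sum in \eqref{eq:star} converges formally. The bound $\ord(U\star V)\le\ord U+\ord V$ is immediate from the $j=0$ term dominating. Since $\Delta$ and $\delta$ kill constants in $\Bbbk$ except through $\Delta(c\DD^0)=0$, we get $1\star V=V\star 1=V$, and also $c\star V=V\star c=cV$ for $c\in\Bbbk$, which gives unitality and centrality.

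The key structural observations concern how $\Delta$ and $\delta$ act on the ordinary product. The map $\delta$ is a derivation of the ordinary product. The map $\Delta=\eta+\partial_\DD$, where $\partial_\DD(a\DD^r)=ra\DD^{r-1}$, is also a derivation of the ordinary product. Moreover $\Delta$ and $\delta$ commute, because $\delta$ commutes with $\eta$, kills $r$, and preserves exponents. Thus \eqref{eq:star} is the standard exponential formula
\[
U\star V=\mu\circ\exp(\Delta\otimes\delta)(U\otimes V),
\]
built from two commuting derivations of a commutative algebra. Associativity then follows from the usual argument. By the Leibniz rule, $\mu\circ\exp(\cdot)$ intertwines $\Delta\otimes1\otimes1$ with $\Delta\otimes1$ applied to products, so both $(U\star V)\star W$ and $U\star(V\star W)$ equal
\[
\mu_3\circ\exp(\Delta_1\delta_2+\Delta_1\delta_3+\Delta_2\delta_3)(U\otimes V\otimes W).
\]
Concretely, I would expand both sides with the binomial theorem, using $\Delta^j(AB)=\sum\binom{j}{i}\Delta^iA\,\Delta^{j-i}B$ and the analogous rule for $\delta$. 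Both expansions become the sum over $(a,b,c)$ of
\[
\frac{\Delta^{a+b}U\,\delta^a\Delta^cV\,\delta^{b+c}W}{a!\,b!\,c!}.
\]
I expect the main obstacle to be justifying these rearrangements for infinite formal sums. The remedy is that, at each fixed exponent, only finitely many index triples contribute, by the order bound $\ord U+\ord V+\ord W-a-b-c$.

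Next we show that the coefficient derivations $\delta$ and $\eta$ are derivations for $\star$. Each of them is a derivation of the ordinary product and commutes with both $\Delta$ and $\delta$, since $\eta$ commutes with $\delta$ and with multiplication by $r$. Applying either one termwise to \eqref{eq:star} therefore gives the Leibniz rule for $\star$. For the commutator, take $\DD=\DD^1$ with coefficient $1$. Then $\Delta\DD=1$, $\Delta^2\DD=0$ and $\delta\DD=0$, so
\[
\DD\star U=\DD U+\delta U,\qquad U\star\DD=\DD U,
\]
and hence $[\DD,U]_\star=\delta U$.

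Finally, consider a compatible differential-field embedding $\mathbb F\to\mathbb F'$, meaning one commuting with $\delta$ and $\eta$, or an inclusion $L_p\subset L_{pq}$. Such a map commutes with $\Delta$, with $\delta$ and with ordinary multiplication, and it preserves supports. Therefore it preserves $\star$ and $\ord$.
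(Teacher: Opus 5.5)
Your proposal is correct and follows essentially the paper's route. Both arguments get finiteness of each coefficient from the upper-bounded supports, use $\Delta$ as an ordinary-product derivation commuting with $\delta$ and $\eta$, prove associativity by expanding both bracketings with the Leibniz rule (your $\Delta^{a+b}U\,\delta^a\Delta^cV\,\delta^{b+c}W/(a!b!c!)$ is the paper's triple sum after relabeling), and compute $\DD\star U$ and $U\star\DD$ directly. One small slip: $\Delta$ equals $\partial_{\DD}+\DD^{-1}\eta$, since the $\eta$-part also lowers the exponent by one, not $\eta+\partial_{\DD}$; this is still a derivation of the ordinary product commuting with $\delta$ and $\eta$, so nothing else in your argument changes.
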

\begin{proof}
If $U,V$ have exponent bounds $R,S$, an ordinary coefficient at $q$ uses $r+s=q$, hence $q-S\leq r\leq R$. This is a finite set in $L_p$. The analogous triple count proves associativity of the ordinary product. The Leibniz rule for $\eta$ gives
\[
 \Delta(ab\DD^{r+s})=((r+s)ab+\eta(a)b+a\eta(b))\DD^{r+s-1},
\]
so $\Delta$ is an ordinary-product derivation lowering order by at least one. The coefficient derivations do not increase order, and they commute with $\Delta$ because $\delta\eta=\eta\delta$.

A coefficient of $U\star V$ at $q$ uses $r+s-j=q$ with $j\geq0$, $r\leq R$, $s\leq S$. Thus $j\leq R+S-q$, and for fixed $j$ the exponent $r$ lies in $[q+j-S,R]\cap L_p$. Every coefficient is finite and every output exponent is at most $R+S$.

Expanding the Leibniz rules gives
\begin{align*}
 (U\star V)\star W
 &=\sum_{i,a,b\geq0}
 \frac{\Delta^{i+a}U\,\Delta^b\delta^iV\,\delta^{a+b}W}{i!a!b!},\\
 U\star(V\star W)
 &=\sum_{\substack{l,m,j\geq0\\j\leq l}}
 \frac{\Delta^lU\,\Delta^m\delta^jV\,\delta^{m+l-j}W}{m!j!(l-j)!}.
\end{align*}
The change $l=i+a$, $m=b$, $j=i$ identifies the sums. At exponent $q$, if the three input bounds are $R,S,T$, then $i+a+b\leq R+S+T-q$; for fixed derivative indices the three source exponents have fixed sum and individual upper bounds, hence individual lower bounds. All rearrangements are therefore finite.

Both $\Delta$ and $\delta$ annihilate $1$ and ${\red{\Bbbk}}$, giving the unit and central copy of ${\red{\Bbbk}}$. Commutation with $\Delta,\delta$ shows that $\delta$ and $\eta$ differentiate each finite coefficient sum of $\star$. Since $\Delta\DD=1$, $\Delta^2\DD=0$ and $\delta\DD=0$, we have $\DD\star U=\DD U+\delta U$ and $U\star\DD=U\DD$. Finally every coefficient formula is a finite expression in field operations and the two derivations. It is preserved by the stated embeddings; injectivity preserves nonzero coefficients and therefore orders.
\end{proof}

The coefficient field ${\red{\mathbb{F}}}$ is used as an ordinary coefficient ring; it is not asserted to be a commutative subalgebra for $\star$.

\begin{lemma}\label{lem:twist}
For $c\in\mathbb Q$ one can extend ${\red{\mathbb{F}}}$ to a commuting differential field containing a nonzero $e$ with $\delta e=0$, $\eta e=ce$. On a lattice containing $c$, the symbol $e\DD^{-c}$ is central and invertible, and multiplication by it for $\star$ is ordinary multiplication.
\end{lemma}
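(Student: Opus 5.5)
We first build the extension and then verify the two claims by direct computation with~\eqref{eq:star}.

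\emph{Construction of $e$.} Take $\mathbb F(e)$ with $e$ transcendental over $\mathbb F$. Set $\delta e=0$ and $\eta e=ce$, and extend both derivations to polynomials in $e$ by the Leibniz rule and to fractions by the quotient rule. The extended maps still vanish on $\Bbbk$.

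We must check that they commute. Both commutators $[\delta,\eta]$ agree on $\mathbb F$. A commutator of derivations is again a derivation, so it suffices to test it on the generator $e$: we have $\delta\eta e=\delta(ce)=0$ and $\eta\delta e=0$. Hence $[\delta,\eta]$ vanishes on $\mathbb F[e]$ and therefore on the fraction field. If transcendence is unwanted, $\mathbb F$ could instead be replaced by any differential extension containing such an $e$; the free construction is enough.

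\emph{Ordinary multiplication.} Put $E=e\DD^{-c}$, which lies in $\Sf_p$ when $c\in L_p$. Compute
\[
\Delta(E)=(-ce+\eta e)\DD^{-c-1}=0 .
\]
So in $E\star V$ only the $j=0$ term survives, and $E\star V=EV$ is the ordinary product.

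For $V\star E$ we need $\delta^j E$ for $j\ge1$. Since $\delta$ acts coefficientwise, $\delta E=(\delta e)\DD^{-c}=0$. Thus $V\star E=VE=EV$, using that the ordinary product is commutative. This gives centrality, and shows that $\star$-multiplication by $E$ is ordinary multiplication.

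\emph{Invertibility.} Take $E'=e^{-1}\DD^{c}$. Then
\[
\Delta E'=(ce^{-1}+\eta(e^{-1}))\DD^{c-1}=(ce^{-1}-ce^{-1})\DD^{c-1}=0,
\]
because $\eta(e^{-1})=-e^{-2}\eta e=-ce^{-1}$. Therefore
\[
E\star E'=EE'=\DD^0=1 ,
\]
and the same holds in the other order.

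By Lemma~\ref{lem:algebra}, passing to the extended field and to a lattice containing $c$ preserves the operations. So these identities hold in that setting.

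No step is a real obstacle. The only point needing care is that the extended derivations still commute, and this reduces to checking on the generator $e$.
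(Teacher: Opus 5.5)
Your proposal is correct and follows the paper's proof essentially step for step. You adjoin a transcendental $e$, note that $[\delta,\eta]$ is a derivation vanishing on $\mathbb{F}$ and on $e$, and then use $\Delta(e\DD^{-c})=\delta(e\DD^{-c})=0$ and $\Delta(e^{-1}\DD^{c})=0$ to kill every $j\geq1$ term of \eqref{eq:star}, which leaves the commutative ordinary product.
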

\begin{proof}
Take $e$ transcendental and prescribe the displayed derivatives on ${\red{\mathbb{F}}}(e)$. Their commutator is a derivation vanishing on ${\red{\mathbb{F}}}$ and $e$, hence on the field. For $C=e\DD^{-c}$ and $C^{-1}=e^{-1}\DD^c$ one has
$\Delta C=\delta C=\Delta C^{-1}=\delta C^{-1}=0$. All positive terms of~\eqref{eq:star} in a product with these elements vanish on either side. Their ordinary product is $1$.
\end{proof}

\begin{lemma}\label{lem:separation}
For every nonzero $f\in {\red{\Bbbk}}\langle x,y\rangle$, the generic symbol $f(\DD,v)$ is nonzero. Moreover, if every word of $f$ contains $x$, contains exactly $k_0\geq1$ copies of $y$, and contains at most $A_0$ copies of $x$, then
\[
 \ord f(\DD,v)\geq1-A_0k_0.
\]
\end{lemma}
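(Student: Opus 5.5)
The plan is to expand $f(\DD,v)$ term by term from~\eqref{eq:star} and to find, for each word, a distinguished jet monomial at a controlled exponent that no other word can produce.

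\textbf{Step 1: reduce each word to elementary products.} By Lemma~\ref{lem:algebra}, $\DD\star U=\DD U+\delta U$. Iterating this gives $\DD^{a}\star U=\sum_i\binom{a}{i}\DD^{a-i}\delta^iU$. For $v=v\DD^0$ we have
\[
 v\star U=\sum_{j\geq0}\frac{1}{j!}P_j\,\DD^{-j}\,\delta^jU,
\]
where $P_j=\Delta^j(v)\DD^{j}$ is a $\Bbbk$-polynomial in $v_{0,0},\dots,v_{0,j}$ whose $v_{0,j}$-coefficient is $1$. Evaluating a word $w=x^{a_0}yx^{a_1}y\cdots yx^{a_k}$ from the right, each $v\star(\cdot)$ and each $\DD\star(\cdot)$ is applied to an already expanded sum.

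\textbf{Step 2: record each elementary term by combinatorial data.} An elementary term is specified by three choices. First, each copy of $\DD$ is either kept, contributing $+1$ to the exponent, or used as one $\delta$ on a factor to its right. Second, each copy of $v$ chooses some $j$, which lowers the exponent by $j$. Third, these $j$ extra derivatives $\delta^j$ are distributed over the factors to its right. The term is then a monomial in the jets $v_{ij}$ times $\DD^{q}$. Here $q$ is the number of kept copies of $\DD$ minus $\sum j$, and the monomial is a product over the $k$ copies of $v$ of jets $v_{i_t,j_t}$ with $i_t$ counting received $\delta$'s. The rightmost $v$ receives no $\delta$ from any $\Delta$, and a $\DD$ standing to the right of every $v$ can only be kept, since $\delta\DD=0$.

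\textbf{Step 3: construct a canonical term and prove it is extremal.} For each word, I would single out the term $T_w$ defined as follows. Every $\DD$ in the block $x^{a_{t-1}}$ differentiates the $v$ immediately following it. Every $v$ except the last passes all $\delta$'s it can to its right neighbour through $\Delta$, with $j$ equal to the $\delta$-content accumulated there. The exponent of $T_w$ is $a_k$ minus the total number of transferred $\Delta$'s. This count is at most $A_0$ per $v$ over $k_0-1$ copies of $v$, together with the at least one $x$ present. That should give exactly the bound $q\geq1-A_0k_0$, after care with the last $v$ and the last block. I would then fix a monomial order on jets: first lexicographic in the multiset of $\eta$-indices $j_t$, then in the $\delta$-indices. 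I would show that $T_w$ is the leading term of $w(\DD,v)$ in the coefficient of $\DD^{q(w)}$, and that every other elementary term of $w$ is strictly smaller or has a different exponent.

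\textbf{Step 4: prove injectivity, which I expect to be the main obstacle.} Using $T_w$, the block lengths $a_0,\dots,a_k$ must be recoverable from the leading data. Once that holds, distinct words of $f$ have distinct leading monomials. Taking the word of $f$ whose leading monomial is maximal then gives a nonzero coefficient of $f(\DD,v)$ at an exponent $\geq1-A_0k_0$. This proves both the nonvanishing and the order bound. (For the first claim with mixed $y$-counts, the monomials have different jet-degree $k$, so the homogeneous pieces separate.)

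The difficulty is that the multiset of jets forgets the order of the copies of $v$. The ordinary product is commutative, so words differing by permuting blocks could collide. My first attempt would be to use the $\eta$-indices to encode position: in $T_w$, the $t$-th $v$ from the right carries $\eta$-index equal to the cumulative count $a_0+\cdots+a_{t-1}$-type partial sums. These are weakly monotone, so the multiset of pairs $(i_t,j_t)$ determines the ordered sequence. I would prove this monotonicity, together with the maximality claim in Step~3, by induction on the number of copies of $v$, peeling off the leftmost block.
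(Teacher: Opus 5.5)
Your plan has a genuine gap, and it is in Step~3, not in the injectivity you flagged. A copy of $v$ is not limited to passing on the $\delta$'s it has received. Since $\Delta^jv=((\eta)_jv)\DD^{-j}\neq0$ for every $j$, at each exponent there are terms that keep every $\DD$ and let one $v$ emit many $\Delta$'s. Already for $w=xy^2$,
\[
 \DD\star v\star v=v^2\DD+(v_{01}v_{10}+2vv_{10})+\bigl(v_{11}v_{10}+\tfrac12v_{02}v_{20}+\tfrac12v_{01}v_{20}\bigr)\DD^{-1}+\cdots.
\]
Your $T_w$ is $v_{11}v_{10}\DD^{-1}$, but $v_{02}v_{20}$ has larger $\eta$-indices, so $T_w$ is not leading in your order.

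This is systematic. Suppose $w$ has exactly two $y$'s and $a$ copies of $x$. Only the first $v$ can emit, and at exponent $a-d$ its $\eta$-index is at most $d$. Equality holds only for the single term $\frac1{d!}\bigl((\eta)_dv\bigr)v_{d,0}\DD^{a-d}$, in which every $\DD$ is kept. Hence for each $d\geq1$ the largest monomial of the coefficient of $\DD^{a-d}$ in your order is $v_{0,d}v_{d,0}$, with coefficient $1/d!$, for every such word, independently of its block lengths. So no word-by-word leading term for this order can be injective. In $f(\DD,v)$ these leading contributions cancel whenever the coefficients of the words of a given $x$-count sum to zero, as for $f=[x,y^2]$. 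The extremality claim fails, and an $\eta$-first order cannot be repaired by a better choice of $T_w$.

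Step~4 has a second gap. You control each word only at its own exponent $q(w)$. The other words of $f$ also contribute at the exponent $q(w^*)$ of the word you select, so comparing leading monomials of different words proves nothing unless the comparison is global. This can be partly arranged via the identity $q+\sum_ti_t=a$ for every elementary term: a kept $\DD$ adds one to $q$, a differentiating $\DD$ adds one to $\sum_ti_t$, and each emitted $\Delta$ moves one unit from $q$ to $\sum_ti_t$. Consequently words of different $x$-counts never interact, and within one $x$-count the exponent is determined by the monomial. You would still need an order, which the proposal does not supply, under which a block-dependent term is the global maximum of every $w(\DD,v)$.

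The paper avoids this combinatorics entirely. It specializes $v$ to the explicit symbol $b=\sum_jc_j\tau^ju^j$ realizing $B=P(\tau\,d/d\tau)$, whose eigenvalues on $\tau^j/j!$ are independent indeterminates. Each word then sends $\epsilon_{A_0}$ to a distinct monomial by~\eqref{eq:separateword}, which gives nonvanishing. The order bound follows because $f(D_0,B)$ annihilates $1$ and so has $D_0$-order at least $1$; combined with the $u$-degree bound $A_0k_0$ and the map $\Phi$, this forces some occupied exponent $r\geq1-A_0k_0$.
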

\begin{proof}
Take an upper bound $A_0$ on the $x$-counts, independent variables $\lambda_0,\ldots,\lambda_{A_0}$, and ${\red{\mathbb{K}}}={\red{\Bbbk}}(\lambda_0,\ldots,\lambda_{A_0})$. On ${\red{\mathbb{K}}}[\tau]$ let $D_0=d/d\tau$, $E=\tau D_0$, and
\[
 P(T)=\sum_{j=0}^{A_0}\lambda_j
   \prod_{\substack{0\leq i\leq A_0\\i\ne j}}\frac{T-i}{j-i},
 \qquad B=P(E).
\]
The denominators are nonzero in characteristic zero. On $\epsilon_j=\tau^j/j!$, for $0\leq j\leq A_0$, we have $D_0\epsilon_j=\epsilon_{j-1}$ (with $D_0\epsilon_0=0$) and $B\epsilon_j=\lambda_j\epsilon_j$. A word with $a$ occurrences of $x$ has the unique form
$w=y^{b_0}xy^{b_1}\cdots xy^{b_a}$, and
\[
 w(D_0,B)\epsilon_{A_0}
 =\left(\prod_{j=0}^a\lambda_{A_0-a+j}^{b_j}\right)\epsilon_{A_0-a}.
 \tag{3.2}\label{eq:separateword}
\]
For fixed $a$, distinct words give distinct monomials in the independent $\lambda$'s. Different $a$ give different basis vectors. Thus $f(D_0,B)\ne0$.

The monic falling factorials $(T)_j=T(T-1)\cdots(T-j+1)$, $0\leq j\leq A_0$, form a basis of the polynomials of degree at most $A_0$, by successive cancellation of leading terms. Write $P(T)=\sum c_j(T)_j$. The relation $D_0\tau=\tau D_0+1$ gives
$(E)_j=\tau^jD_0^j$: induction uses $D_0^j\tau=\tau D_0^j+jD_0^{j-1}$ and multiplies $(E)_j$ by $E-j$. Hence
\[
 B=\sum_{j=0}^{A_0}c_j\tau^jD_0^j.
\]
On ${\red{\mathbb{K}}}(\tau,u)$ put $\delta=\partial_\tau$ and $\eta=u\partial_u$, both killing ${\red{\mathbb{K}}}$, and set $b=\sum c_j\tau^ju^j$. For finite sums of monomials $\tau^\ell u^q\DD^r$, where $\ell,q\geq0$ and $r\in\mathbb Z$, formula~\eqref{eq:star} reads
\[
 (\tau^\ell u^q\DD^r)\star(\tau^m u^n\DD^s)
 =\sum_{i=0}^m\frac{(r+q)_i(m)_i}{i!}
 \tau^{\ell+m-i}u^{q+n}\DD^{r+s-i}.
 \tag{3.3}\label{eq:specialize}
\]
This space is closed under $\star$. The map
$\Phi(\tau^\ell u^q\DD^r)=\tau^\ell D_0^{r+q}$ to symbols with $\eta=0$, $\delta=\partial_\tau$, preserves products by~\eqref{eq:specialize}. It sends $\DD$ to $D_0$ and $b$ to $B$. The ordinary differential symbols of nonnegative $D_0$-degree act on ${\red{\mathbb{K}}}[\tau]$ by the Leibniz rule, compatibly with multiplication. Therefore $\Phi(f(\DD,b))=f(D_0,B)\ne0$, so $f(\DD,b)\ne0$.

Each coefficient of $f(\DD,v)$ is a polynomial in finitely many jets: this follows by induction on word length from the finite coefficient calculation in Lemma~\ref{lem:algebra}, without inversion of jets. Sending $v_{ij}$ to $\delta^i\eta^jb$ is a differential polynomial-ring homomorphism, and specializes the symbol to $f(\DD,b)$. Thus the generic symbol is nonzero.

Under the additional assumptions, the finite expression $f(\DD,b)$ has the form
\[
 \sum_r\sum_{q=0}^{A_0k_0} a_{r,q}(\tau)u^q\DD^r,
\]
because there are $k_0$ copies of $b$, each of $u$-degree at most $A_0$, and the product formula never raises their summed $u$-degree. The nonzero operator $f(D_0,B)$ annihilates $1$: in each word its rightmost $D_0$ has only powers of $B$ to its right, which send $1$ to a scalar, then killed by $D_0$. A polynomial differential operator of order at most zero is multiplication by a function; if it annihilates $1$ that function is zero. Hence $\ord f(D_0,B)\geq1$. If all occupied integer exponents $r$ above were less than $1-A_0k_0$, then $r+q\leq0$ for all terms, contradicting that inequality after applying $\Phi$. Some occupied $r$ is therefore at least $1-A_0k_0$. Specialization cannot turn an identically zero generic coefficient into a nonzero one, proving the required lower bound.
\end{proof}

For a central differential constant $t$ define the variations by
\[
 h(\DD,Z+tV)=\sum_{k=0}^M t^k B_k(Z;V),\qquad B_0(Z;V)=h(\DD,Z).
 \tag{3.4}\label{eq:variations}
\]
Here and throughout, free-polynomial evaluation in symbols uses $\star$.

\begin{lemma}\label{lem:variation}
For every $Z\in\Sf_p({\red{\mathbb{F}}})$ there is a nonempty $I\subseteq\{1,\ldots,M\}$ and numbers $r_k\in L_p$, $k\in I$, such that for every rational $s$,
\[
 \ord B_k(Z;v\DD^{-s})=-r_k-ks\quad(k\in I).
 \tag{3.5}\label{eq:covariance}
\]
Each leading coefficient is a nonzero polynomial in finitely many jets, homogeneous of ordinary degree $k$. For $k\notin I$, $B_k(Z;V)=0$ for every symbol $V$, on any common enlarged lattice and differential extension. At $Z=0$ with initial field ${\red{\Bbbk}}$ and zero derivations,
\[
 s_0:=\max_{k\in I}\frac{-r_k}{k}\geq-A+\frac1M.
 \tag{3.6}\label{eq:initialslope}
\]
\end{lemma}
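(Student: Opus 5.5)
The plan is to compute $B_k(Z;V)$ by expanding $h(\DD,Z+tV)$ word by word. Since $t$ is a central differential constant, it passes through every $\star$-product. Hence $B_k(Z;V)$ is the sum, over words $w$ of $g$ and over choices of $k$ of the $y$-positions in $w$, of the $\star$-product in which $V$ occupies the chosen positions, $Z$ the other $y$-positions, and $\DD$ the $x$-positions. This gives $B_k=0$ for $k>M$, since then no word has $k$ copies of $y$.

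For the covariance \eqref{eq:covariance} I would use Lemma~\ref{lem:twist}. Adjoin $e$ with $\eta e=se$ and $\delta e=0$, on a lattice containing $s$. Then $C=e\DD^{-s}$ is central, and $\star$-multiplication by $C$ is ordinary multiplication. The substitution $V=vC$ therefore gives $B_k(Z;vC)=C^k\star B_k(Z;v)$, because $B_k$ is homogeneous of degree $k$ in $V$ and $C$ is central. The factor $C^k=e^k\DD^{-ks}$ is ordinary multiplication by an invertible monomial, so the orders shift exactly by $-ks$ and the leading coefficients are multiplied by $e^k$.

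Next I would pass from $vC$ back to $v\DD^{-s}$. Put $v'=ve$. Then $v'$ is again a differential indeterminate over ${\red{\mathbb{F}}}$: its jets $\delta^i\eta^j(ve)$ are triangular, invertible combinations of the $v_{ij}$ times powers of $e$, so they are algebraically independent over ${\red{\mathbb{F}}}(e)$. Thus $vC=v'\DD^{-s}$, and $B_k(Z;v'\DD^{-s})$ has the same order as $B_k(Z;v\DD^{-s})$, using the compatible embeddings from Lemma~\ref{lem:algebra}. Combining the two steps, $\ord B_k(Z;v\DD^{-s})=\ord B_k(Z;v)-ks$. So I define $I=\{k:B_k(Z;v)\ne0\}$ and $-r_k=\ord B_k(Z;v)$, which lies in $L_p$ when $Z\in\Sf_p$ and $s=0$.

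The leading coefficient is a polynomial in the $v$-jets, since no inversion of jets occurs (as in Lemma~\ref{lem:separation}). It is homogeneous of ordinary degree $k$, because $\delta$ and $\eta$ preserve the ordinary degree in the jets. For $k\notin I$, the generic vanishing specializes to every symbol $V$: each coefficient of $B_k(Z;V)$ is obtained from the generic coefficients by a differential specialization of the jets, and for non-monomial $V$ one first writes $V$ coefficientwise.

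The step I expect to be the main obstacle is showing $I\neq\emptyset$ for arbitrary $Z$. I would argue this by specializing $Z$ away. Consider $B_M(Z;v)$: its $Z$-free part is exactly $h_M(\DD,v)$, since only words with $M$ copies of $y$ contribute, and then every $y$-position carries $V$. In fact $B_M(Z;v)=h_M(\DD,v)$ for every $Z$, because a word with $M$ $y$-positions has no room for $Z$ once all of them are chosen for $V$. By Lemma~\ref{lem:separation} this symbol is nonzero, so $M\in I$.

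For \eqref{eq:initialslope}, take $Z=0$. Then $B_k(0;v)=h_k(\DD,v)$. Every word of $h_k$ contains $x$ by Lemma~\ref{lem:reduction} and has at most $A$ copies of $x$, so Lemma~\ref{lem:separation} gives $-r_k\ge 1-Ak$ whenever $h_k\neq0$. Taking $k=M$ yields
\[
 s_0\ \ge\ \frac{1-AM}{M}\ =\ -A+\frac1M .
\]
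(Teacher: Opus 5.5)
Your treatment of the covariance, of homogeneity, of $M\in I$ and of \eqref{eq:initialslope} is essentially the paper's. The covariance argument is the central twist $C=e\DD^{-s}$ plus the observation that $ve$ is again a differential indeterminate, i.e.\ an invertible triangular jet substitution. For $M\in I$, the key fact is that $B_M(Z;V)=h_M(\DD,V)$ does not depend on $Z$.

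\textbf{The gap.} It lies in the clause ``for $k\notin I$, $B_k(Z;V)=0$ for every symbol $V$.'' The identity you actually have is $B_k(Z;v)=0$, with $v$ a single generic coefficient at exponent $0$. Differential specialization of its jets gives $B_k(Z;a)=0$ for coefficient elements $a$. Combined with your twist, it also gives $B_k(Z;a\DD^r)=0$ for monomials. But $B_k(Z;\cdot)$ is a homogeneous form of degree $k$, not an additive map. For $V=a\DD^{r}+b\DD^{r'}$ with $r\ne r'$, the value contains mixed terms in which $a\DD^r$ and $b\DD^{r'}$ occupy different chosen $y$-positions of the same word. These mixed terms are not specializations of $B_k(Z;v)$, and ``writing $V$ coefficientwise'' does not remove them. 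No single central twist moves two different exponents to $0$ at once, so the monomial case does not extend by itself.

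\textbf{The missing step: polarization.}
\begin{itemize}
\item Let $H_k(V_1,\dots,V_k)$ be the coefficient of $t_1\cdots t_k$ in $B_k(Z;\sum_i t_iV_i)$. This is a symmetric form, additive in each slot, with $H_k(V,\dots,V)=k!\,B_k(Z;V)$.
\item Substituting $v=\sum_i t_ia_i$, with central constants $t_i$, into the jet identity gives $H_k(a_1,\dots,a_k)=0$ for coefficient elements.
\item For monomials $V_i=a_i\DD^{r_i}$, twist each slot separately by its own central $C_i=e_i\DD^{r_i}$, where $\delta e_i=0$ and $\eta e_i=-r_ie_i$. This gives $H_k(V_1,\dots,V_k)=\bigl(\prod_i C_i\bigr)\star H_k(e_1^{-1}a_1,\dots,e_k^{-1}a_k)=0$.
\item Additivity in each slot then covers finite Laurent sums, and characteristic zero lets you divide by $k!$.
\item For an arbitrary upper-bounded $V$, add a truncation remark. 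A fixed output coefficient of $B_k(Z;V)$ involves only input exponents above a lower bound obtained from the other factors' upper bounds. So $V$ may be cut to a finite Laurent sum without changing that coefficient.
\end{itemize}

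With this added, your proof is complete. Note that $I\neq\emptyset$, which you flagged as the main obstacle, is immediate once $B_M$ is seen to be independent of $Z$. The genuinely delicate step is the vanishing for arbitrary $V$.
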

\begin{proof}
In a word expansion, $B_k$ chooses exactly $k$ $y$-positions for $V$. Its coefficients at $V=v\DD^{-s}$ are finite jet polynomials homogeneous of ordinary degree $k$, with exponents in $-ks+L_p$ and an upper bound. Derivations preserve jet degree and lower the symbol exponent only by integers. Also, for a central symbol $C$,
\[
 B_k(Z;C\star V)=C^k\star B_k(Z;V),
 \tag{3.7}\label{eq:centralvariation}
\]
by moving the $k$ central factors out of each word.

At slope zero let $I$ consist of the nonzero $B_k(Z;v)$ and set $-r_k=\ord B_k(Z;v)$. For a rational $s$, adjoin $e$ with $\delta e=0$, $\eta e=se$, and let $w$ be generic over ${\red{\mathbb{F}}}(e)$. Substitution $v=ew$ sends
\[
 v_{ij}\longmapsto e\sum_{a=0}^j\binom ja s^{j-a}w_{ia}.
 \tag{3.8}\label{eq:jetsubstitution}
\]
In any finite set of jets closed under lowering $j$, this is an invertible triangular linear substitution, with diagonal coefficient $e\ne0$; the inverse is the binomial transform with $-s$ and multiplication by $e^{-1}$. It preserves nonvanishing of every coefficient polynomial. Lemma~\ref{lem:twist} and~\eqref{eq:centralvariation} yield
\[
 B_k(Z;(ew)\DD^{-s})=e^k\DD^{-ks}B_k(Z;w).
\]
Thus the same $I$ works at every slope, and the orders are exactly~\eqref{eq:covariance}.

We justify also the assertion for arbitrary $V$ when $k\notin I$. Polarize:
\[
 H_k(V_1,\ldots,V_k)=[t_1\cdots t_k]
 B_k\left(Z;\sum_i t_iV_i\right).
\]
This is symmetric, additive in each argument, and multilinear over central scalars; moreover $H_k(V,\ldots,V)=k!B_k(Z;V)$. The identity $B_k(Z;v)=0$ is coefficientwise a polynomial identity in the jets. Substituting $v=\sum t_i a_i$ for coefficient elements in any differential extension and comparing $t_1\cdots t_k$ gives $H_k(a_1,\ldots,a_k)=0$. For monomials $V_i=a_i\DD^{r_i}$, adjoin nonzero $e_i$ with $\delta e_i=0$, $\eta e_i=-r_ie_i$. Then $C_i=e_i\DD^{r_i}$ is central and
\[
 H_k(V_1,\ldots,V_k)
 =\left(\prod_i C_i\right)\star H_k(e_1^{-1}a_1,\ldots,e_k^{-1}a_k)=0.
\]
Injectivity of the extension gives the identity in the original field as well. Additivity proves it for finite Laurent sums. For a fixed coefficient of an arbitrary upper-bounded symbol $V$, each participating exponent has a lower bound obtained by subtracting the upper bounds of the other factors from the specified output exponent; differentiations only decrease the sum. Truncating $V$ below a common such bound leaves this coefficient unchanged and reduces to finite Laurent sums. Hence the identity holds for all $V$.

Finally $B_M(Z;V)=h_M(\DD,V)$, independently of $Z$. Lemma~\ref{lem:separation} makes this nonzero at generic $v$ over ${\red{\Bbbk}}$; its coefficient polynomials remain nonzero over ${\red{\mathbb{F}}}$ since the jets remain independent. Thus $M\in I$. At $Z=0$, $B_k(0;V)=h_k(\DD,V)$, so $I$ consists exactly of the nonzero $h_k$. Every word of such an $h_k$ contains $x$, and Lemma~\ref{lem:separation} gives
$-r_k\geq1-Ak$. Thus $-r_k/k\geq-A+1/k\geq-A+1/M$, proving~\eqref{eq:initialslope}.
\end{proof}

\section{Realizing a differential equation with inequations}\label{sec:branch}
We first record the elementary algebra used in this section. A polynomial ring in finitely many variables over a field is a unique factorization domain. One derivation is as follows. The one-variable division algorithm gives factorization by degree induction and primality of irreducibles through the Bezout identity. For a unique factorization domain ${\red{\mathcal U}}$, call a polynomial primitive when no irreducible of ${\red{\mathcal U}}$ divides all its coefficients. Reduction modulo each irreducible prime shows that the product of primitive polynomials is primitive. Splitting off coefficient contents, factoring over $\Frac({\red{\mathcal U}})$, clearing denominators and then removing contents yields factorization in ${\red{\mathcal U}}[T]$. Comparison of the minimum exponent of each prime among coefficients shows that two primitive associates over $\Frac({\red{\mathcal U}})$ differ by a unit of ${\red{\mathcal U}}$; this proves uniqueness. Induction on the number of variables gives the assertion. In particular an irreducible polynomial generates a prime ideal. Polynomial extension of its quotient is still a domain, including extension by an arbitrary set of independent variables, since each product involves only finitely many variables.

\begin{lemma}\label{lem:branch}
Let ${\red{\mathbb{F}}}/{\red{\Bbbk}}$ carry commuting derivations $\delta,\eta$ fixing ${\red{\Bbbk}}$, where ${\red{\Bbbk}}$ is algebraically closed of characteristic zero. Put
\[
 R_N={\red{\mathbb{F}}}[X_{ij}:i,j\geq0,\ i+j\leq N].
\]
Let $P\in R_N$ be nonconstant and irreducible, of total differential order exactly $N$. For $N=0$ this means $P\in {\red{\mathbb{F}}}[X_{00}]$. Let $Q_1,\ldots,Q_\ell\in R_N$ be nonzero and not divisible by $P$. Then there is a field extension ${\red{\mathbb{E}}}/{\red{\mathbb{F}}}$ carrying commuting extensions of $\delta,\eta$, and $v\in {\red{\mathbb{E}}}$, such that
\[
 P((\delta^i\eta^jv))=0,\qquad Q_r((\delta^i\eta^jv))\ne0\quad(1\leq r\leq\ell).
\]
The formal linear differential polynomial
\[
 {\red{\mathrm{d}}}P_v(w)=\sum_{i+j\leq N}P_{X_{ij}}((\delta^a\eta^bv))\,\delta^i\eta^jw
\]
in a new differential indeterminate $w$ is nonzero.
\end{lemma}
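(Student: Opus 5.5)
The plan is to build a generic zero of $P$ directly, as the fraction field of $R_N/(P)$ with the higher jets left free. The boundary jets are then determined by prolonging the derivations. Concretely, let $\mathcal R={\red{\mathbb{F}}}[X_{ij}:i,j\geq0]$ be the full jet ring. Set $\mathcal O=R_N/(P)\otimes_{R_N}\mathcal R$, which is $R_N/(P)$ with the jets $X_{ij}$, $i+j>N$, adjoined as independent variables. By the unique factorization facts recorded just before the lemma, $(P)$ is prime in $R_N$ and $\mathcal O$ is a domain. Let ${\red{\mathbb{E}}}_0=\Frac(\mathcal O)$ and $v=\bar X_{00}$. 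Since $Q_r\notin(P)$ and $Q_r$ involves only variables of $R_N$, each $Q_r$ is nonzero in ${\red{\mathbb{E}}}_0$. However, $\delta,\eta$ do not simply act as shifts $X_{ij}\mapsto X_{i+1,j}$ and $X_{ij}\mapsto X_{i,j+1}$ on this quotient, because $P$ would have to be sent into $(P)$. Hence the real construction must choose the jets differently.

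I would instead choose a jet $X_{\alpha}$ of order exactly $N$ with $\partial P/\partial X_\alpha\ne0$. This is possible since $P$ has total order $N$; if $N=0$, $P$ is univariate over ${\red{\mathbb{F}}}$. Then $S=\partial P/\partial X_\alpha$ is not divisible by $P$, because its degree in $X_\alpha$ is smaller. Next I would build ${\red{\mathbb{E}}}$ in two stages. Take ${\red{\mathbb{E}}}_1$ as the fraction field of ${\red{\mathbb{F}}}[X_{ij}: i+j\leq N]/(P)$ with $v=\bar X_{00}$. Treat the remaining order-$\leq N$ jets other than $X_\alpha$ as free parameters, so that $\bar X_\alpha$ is algebraic over them and separable, by characteristic zero and $S\neq0$ in ${\red{\mathbb{E}}}_1$. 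Define the derivations on the transcendence basis $\{X_\beta:\beta\neq\alpha,\ |\beta|\leq N\}$ so that they send $X_\beta$ to $X_{\beta+e_1}$ and $X_{\beta+e_2}$. If a target has order $N+1$, it becomes a new independent indeterminate, and all higher jets are free. The value on $X_\alpha$ is then forced by differentiating $P=0$:
\[
 \delta\bar X_\alpha=-S^{-1}\Bigl(P^{\delta}+\sum_{\beta\ne\alpha}P_{X_\beta}\,\delta X_\beta\Bigr).
\]
Here $P^{\delta}$ applies $\delta$ to the coefficients, and $\eta$ is handled analogously. The derivation extends uniquely to the separable algebraic extension. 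The obstacle is that the jet structure requires $\delta\bar X_\alpha$ to be the jet $X_{\alpha+e_1}$, and $\eta\delta=\delta\eta$ must also hold. So it is cleaner to set things up as a PDE ideal: let $\mathcal P$ be the differential ideal generated by $P$ in ${\red{\mathbb{F}}}\{X\}$ and saturate by $S$. The general component $[P]:S^\infty$ is prime, by the standard Ritt–Kolchin argument using an orderly ranking whose leader is $X_\alpha$ and which gives a coherent autoreduced set $\{P\}$, since a single element is trivially coherent. Moreover $[P]:S^\infty\cap R_N=(P)$ by Rosenfeld's lemma. Setting ${\red{\mathbb{E}}}=\Frac({\red{\mathbb{F}}}\{X\}/([P]:S^\infty))$ and $v=\bar X_{00}$, the derivations commute automatically, $P(v)=0$, and $Q_r(v)\ne0$ because $Q_r\in R_N\setminus(P)$. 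I expect the main work to be a self-contained proof of this Rosenfeld step, namely that $[P]:S^\infty$ meets $R_N$ exactly in $(P)$. I would prove it by reducing any element with Ritt division by the derivatives of $P$, whose leaders $X_{\alpha+\gamma}$ appear linearly with coefficient $S$. A remainder lying in $R_N$, which is free of proper derivatives of $X_\alpha$, is then divisible by $P$ because $S\notin(P)$.

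For the final assertion, the coefficient of $\delta^{\alpha_1}\eta^{\alpha_2}w$ in ${\red{\mathrm{d}}}P_v(w)$ is $S(v)$, which is nonzero in ${\red{\mathbb{E}}}$ since $S\in R_N\setminus(P)$. The monomials $\delta^i\eta^jw$ are independent in the new differential indeterminate, so ${\red{\mathrm{d}}}P_v\neq0$. To secure the inequations, one adds $S$ to the list $Q_r$; this is harmless, since it is already covered by the same non-divisibility argument.
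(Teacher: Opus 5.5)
Your third construction, via the general component $[P]:S^\infty$, is correct. It takes a genuinely different route from the paper. You diagnose your first two constructions as unworkable yourself, and they can simply be dropped.

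\textbf{The paper's route.} It never introduces rankings or differential ideals.
\begin{itemize}
\item It picks $c\in\Bbbk$ with $\sum_j c^jP_{X_{N-j,j}}\neq0$ and passes to the commuting directions $\partial=\delta$, $\nabla=\eta-c\delta$ by the binomial substitution~\eqref{eq:directionchange}. This makes $P$ separable in the pure jet $z=\partial^Nv$.
\item It takes the jets $\partial^a\nabla^bv$ with $a<N$ as independent variables and adjoins $u$ as a root of $\widetilde P$.
\item It defines $\nabla$ by shifting $b$, and $\partial$ by shifting $a$ with $\partial t_{N-1,b}=\nabla^bu$.
\item It extends both derivations through the separable extension by~\eqref{eq:derivationextension} and checks commutation on generators.
\end{itemize}

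\textbf{Your route.} You keep $\delta,\eta$, fix an orderly ranking, and get commutativity for free from $\mathbb{F}\{X\}$. The work shifts to showing that $[P]:S^\infty$ is prime and meets $R_N$ exactly in $(P)$; the latter also gives the embedding $\mathbb{F}\hookrightarrow\mathbb{E}$.

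That step is sound, and can be done as follows.
\begin{itemize}
\item Each proper derivative $\theta X_\alpha$ is the leader of exactly one $\theta P=S\,\theta X_\alpha+(\text{lower})$.
\item Well-founded substitution $\theta X_\alpha\mapsto-(\text{lower})/S$ along the ranking then identifies the localization at $S$ of $\mathbb{F}\{X\}/[P]$ with $\mathcal N[S^{-1}]/(P)$. Here $\mathcal N$ is the polynomial ring in the non-principal jets.
\item Since $P$ is prime in $\mathcal N$ and $S\notin(P)$, this yields primality and the intersection with $R_N$ at once.
\end{itemize}
Note that primality needs the reduction statement for all partially reduced elements, not only those lying in $R_N$, as your sketch phrases it.

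\textbf{What each buys.} Your route avoids the change of directions and is the standard general mechanism, working unchanged for any number of commuting derivations. The paper's route is more elementary and fully explicit: it uses nothing beyond extending derivations through a separable algebraic extension.

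\textbf{One precision.} With two derivations there are only two orderly rankings; within each order, jets are ranked monotonically in $i$ or in $j$. So fix the ranking first and take $X_\alpha$ to be the leader of $P$. Do not pick an arbitrary order-$N$ jet with $P_{X_\alpha}\neq0$ and then look for a ranking having it as leader, since such a ranking may not exist.
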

\begin{proof}
We use an explicit extension rule for derivations. If ${\red{\mathbb{L}}}={\red{\mathbb{K}}}(u)$ is a finite simple extension and its minimal polynomial ${\red{\mu}}$ satisfies ${\red{\mu}}'(u)\ne0$, every derivation ${\red{\partial}}:{\red{\mathbb{K}}}\to {\red{\mathbb{L}}}$ extends uniquely to ${\red{\mathbb{L}}}$, with
\[
 {\red{\partial}}(u)=-\frac{{\red{\mu}}^{\red{\partial}}(u)}{{\red{\mu}}'(u)},\qquad
 {\red{\mu}}^{\red{\partial}}({\red{\xi}})=\sum_m {\red{\partial}}(a_m){\red{\xi}}^m\quad\text{for }{\red{\mu}}({\red{\xi}})=\sum_m a_m{\red{\xi}}^m.
 \tag{4.1}\label{eq:derivationextension}
\]
This allows ${\red{\partial}}({\red{\mathbb{K}}})$ to lie in ${\red{\mathbb{L}}}$ rather than ${\red{\mathbb{K}}}$. Indeed, define on ${\red{\mathbb{K}}}[{\red{\xi}}]$ the sum of coefficient differentiation evaluated at $u$ and formal ${\red{\xi}}$-differentiation evaluated at $u$ times the displayed value of ${\red{\partial}}(u)$. The product rule holds and ${\red{\mu}}$ is killed, so this descends to ${\red{\mathbb{K}}}[{\red{\xi}}]/({\red{\mu}})={\red{\mathbb{K}}}(u)$. Differentiating ${\red{\mu}}(u)=0$ proves uniqueness. The commutator of two derivations is a derivation, by expanding a product and canceling mixed terms. If that commutator vanishes on ${\red{\mathbb{K}}}$, applying it to ${\red{\mu}}(u)$ shows it vanishes on $u$, since ${\red{\mu}}'(u)\ne0$.

For $N=0$, take ${\red{\mathbb{E}}}={\red{\mathbb{F}}}[X_{00}]/(P)$ and $v$ the class of $X_{00}$. Characteristic zero gives $P'\ne0$ of smaller degree than $P$, hence $P'(v)\ne0$. Extend both derivations by~\eqref{eq:derivationextension}; their commutator vanishes by the preceding argument. The evaluation kernel is exactly $(P)$, so every $Q_r(v)$ is nonzero and the linearization is $P'(v)w\ne0$.

Now suppose $N\geq1$. At least one highest-order partial derivative $P_{X_{N-j,j}}$ is nonzero. Thus
$G(C)=\sum_{j=0}^N C^jP_{X_{N-j,j}}$ is a nonzero polynomial over $\Frac(R_N)$. Its root set has at most $N$ elements, and the infinite field ${\red{\Bbbk}}$ contains a $c$ with $G(c)\ne0$. Set ${\red{\partial}}_{\red{\mathbb{F}}}=\delta$ and ${\red{\nabla}}_{\red{\mathbb{F}}}=\eta-c\delta$. They commute and kill ${\red{\Bbbk}}$.

Use variables $T_{ab}$ for $a+b\leq N$ and the isomorphism
\[
 \Phi(X_{ij})=\sum_{b=0}^j\binom jb c^{j-b}T_{i+j-b,b}.
 \tag{4.2}\label{eq:directionchange}
\]
Its inverse sends $T_{ab}$ to $\sum_{j=0}^b\binom bj(-c)^{b-j}X_{a+b-j,j}$. These are inverse binomial substitutions for the commuting directions $\delta={\red{\partial}}$, $\eta={\red{\nabla}}+c{\red{\partial}}$. Put $\widetilde P=\Phi(P)$, $\widetilde Q_r=\Phi(Q_r)$ and $z=T_{N0}$. The chain rule gives
\[
 \widetilde P_z=\Phi(G(c))\ne0.
\]
Thus $\widetilde P$ has positive $z$-degree and remains irreducible.

Let
\[
 {\red{\mathcal B}}_0={\red{\mathbb{F}}}[T_{ab}:a+b\leq N,\ (a,b)\ne(N,0)],\qquad
 {\red{\mathcal B}}={\red{\mathbb{F}}}[T_{ab}:0\leq a<N,\ b\geq0].
\]
The domain
\[
 {\red{\mathcal C}}={\red{\mathcal B}}[z]/(\widetilde P)
 =\bigl({\red{\mathcal B}}_0[z]/(\widetilde P)\bigr)
 [T_{ab}:0\leq a<N,\ a+b>N]
\]
contains ${\red{\mathcal B}}$ injectively: a nonzero multiple of $\widetilde P$ in ${\red{\mathcal B}}[z]$ has positive $z$-degree and cannot belong to ${\red{\mathcal B}}\setminus\{0\}$. Let ${\red{\mathbb{K}}}=\Frac({\red{\mathcal B}})$, ${\red{\mathbb{E}}}=\Frac({\red{\mathcal C}})$, let $t_{ab}$ denote the variables in ${\red{\mathbb{K}}}$, and let $u$ denote the class of $z$. Then ${\red{\mathbb{E}}}={\red{\mathbb{K}}}(u)$, and $u$ is algebraic over ${\red{\mathbb{K}}}$. The localization ${\red{\mathbb{K}}}[z]/(\widetilde P)$ is a finite-dimensional domain over ${\red{\mathbb{K}}}$, hence a field: multiplication by a nonzero element is injective and therefore surjective. Thus a scalar multiple of $\widetilde P$ is the minimal polynomial of $u$ over ${\red{\mathbb{K}}}$.

Evaluation ${\red{\operatorname{ev}}}:{\red{\mathcal B}}_0[z]\to {\red{\mathbb{E}}}$ has kernel exactly $(\widetilde P)$, because the quotient injects first into its polynomial extension ${\red{\mathcal C}}$ and then into its fraction field. Hence
\[
 {\red{\varsigma}}={\red{\operatorname{ev}}}(\widetilde P_z)\ne0,\qquad {\red{\operatorname{ev}}}(\widetilde Q_r)\ne0.
 \tag{4.3}\label{eq:separant}
\]
For the first assertion, $\widetilde P_z$ is nonzero and has smaller $z$-degree than $\widetilde P$; for the others, use the stated nondivisibility, preserved by $\Phi$.

Define ${\red{\nabla}}_0:{\red{\mathbb{K}}}\to {\red{\mathbb{K}}}$ by ${\red{\nabla}}_0|_{\red{\mathbb{F}}}=\eta-c\delta$ and ${\red{\nabla}}_0(t_{ab})=t_{a,b+1}$. Independence of the variables defines it on ${\red{\mathcal B}}$, and the quotient rule defines it on ${\red{\mathbb{K}}}$. Extend it to ${\red{\nabla}}:{\red{\mathbb{E}}}\to {\red{\mathbb{E}}}$ by~\eqref{eq:derivationextension}. Next define ${\red{\partial}}_0:{\red{\mathbb{K}}}\to {\red{\mathbb{E}}}$ by ${\red{\partial}}_0|_{\red{\mathbb{F}}}=\delta$ and
\[
 {\red{\partial}}_0(t_{ab})=
 \begin{cases}t_{a+1,b},&a<N-1,\\ {\red{\nabla}}^b(u),&a=N-1.\end{cases}
 \tag{4.4}\label{eq:constructD}
\]
Again independence and the quotient rule define this derivation. Its values are permitted to lie in ${\red{\mathbb{E}}}$. Extend it to ${\red{\partial}}:{\red{\mathbb{E}}}\to {\red{\mathbb{E}}}$ by~\eqref{eq:derivationextension}.

We check commutativity after constructing both derivations. On ${\red{\mathbb{F}}}$ it holds. For $a<N-1$ both ${\red{\partial}}{\red{\nabla}}(t_{ab})$ and ${\red{\nabla}}{\red{\partial}}(t_{ab})$ equal $t_{a+1,b+1}$; for $a=N-1$ they both equal ${\red{\nabla}}^{b+1}(u)$. Thus $[{\red{\partial}},{\red{\nabla}}]$ vanishes on ${\red{\mathbb{K}}}$, and~\eqref{eq:separant} and the commutator argument following~\eqref{eq:derivationextension} make it vanish on ${\red{\mathbb{E}}}$.

Set $v=t_{00}$. Then ${\red{\partial}}^a{\red{\nabla}}^bv=t_{ab}$ for $a<N$, while ${\red{\partial}}^Nv=u$. These are every pair $a+b\leq N$. Define on ${\red{\mathbb{E}}}$ the extensions $\delta_{\red{\mathbb{E}}}={\red{\partial}}$, $\eta_{\red{\mathbb{E}}}={\red{\nabla}}+c{\red{\partial}}$. They restrict correctly to ${\red{\mathbb{F}}}$, commute, and kill ${\red{\Bbbk}}$. Their jets satisfy
\[
 \delta_{\red{\mathbb{E}}}^i\eta_{\red{\mathbb{E}}}^jv=\sum_{b=0}^j\binom jb c^{j-b}{\red{\partial}}^{i+j-b}{\red{\nabla}}^bv.
\]
Thus evaluation at these jets is ${\red{\operatorname{ev}}}\circ\Phi$, whose kernel on $R_N$ is exactly $(P)$. This proves the equation and all the inequations.

Finally evaluating $\widetilde P_z=\Phi(G(c))$ yields
\[
 0\ne {\red{\varsigma}}=\sum_{j=0}^N c^jP_{X_{N-j,j}}((\delta_{\red{\mathbb{E}}}^a\eta_{\red{\mathbb{E}}}^bv)).
\]
Some displayed partial-derivative value is nonzero, so the coefficient of the corresponding independent jet of $w$ in ${\red{\mathrm{d}}}P_v(w)$ is nonzero. The conclusion concerns this formal polynomial, not its action on every fixed differential field.
\end{proof}

\section{Finite Newton approximation}\label{sec:newton}
\begin{lemma}\label{lem:step}
Suppose $Z\in\Sf_p({\red{\mathbb{F}}})$ has nonzero residual with leading term $c\DD^{-\rho}$, where $c\ne0$ and $\rho\in L_p$. With the data of Lemma~\ref{lem:variation}, put
\[
 s=\max_{k\in I}\frac{\rho-r_k}{k},\qquad
 p'=\operatorname{lcm}(p,\operatorname{den}(s)).
\]
The coefficient $\Psi(v)$ at $\DD^{-\rho}$ in $h(\DD,Z+v\DD^{-s})$ is a nonconstant polynomial in finitely many jets, with $\Psi(0)=c$ and ordinary degree $q\leq M$. There are no coefficients above $-\rho$. Select an irreducible factor $P$ of maximal total differential order, with exact multiplicity $m$, and write $\Psi=P^mQ$, $P\nmid Q$. There is a commuting differential extension ${\red{\mathbb{E}}}/{\red{\mathbb{F}}}$ and $v\in {\red{\mathbb{E}}}$ such that $P(v)=0$, $Q(v)\ne0$, ${\red{\mathrm{d}}}P_v\ne0$. For $Z'=Z+v\DD^{-s}\in\Sf_{p'}({\red{\mathbb{E}}})$, the residual is zero or has leading exponent $-\rho'$ with $\rho'>\rho$. In the latter case the next slope $s'$ is strictly greater than $s$, and the next leading polynomial has degree at most $m$.
\end{lemma}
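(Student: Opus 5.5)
We expand the residual of $Z+v\DD^{-s}$ by the variations~\eqref{eq:variations}: taking $t$ as a formal central constant and then specializing $t=1$ (legitimate since each $B_k$ is a finite sum of words), we get $h(\DD,Z+v\DD^{-s})=h(\DD,Z)+\sum_{k\in I}B_k(Z;v\DD^{-s})$, with the terms $k\notin I$ vanishing identically by Lemma~\ref{lem:variation}.

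\emph{Shape of $\Psi$.} By~\eqref{eq:covariance}, $\ord B_k(Z;v\DD^{-s})=-r_k-ks\le-\rho$, because the choice of $s$ gives $r_k+ks\ge\rho$ for all $k\in I$. Equality holds exactly for the nonempty set $I_s$ of maximizing $k$. Hence nothing lies above $-\rho$. The coefficient at $\DD^{-\rho}$ is
\[
 \Psi(v)=c+\sum_{k\in I_s}\beta_k(v),
\]
where $\beta_k$ is the leading coefficient of $B_k$. Each $\beta_k$ is a nonzero jet polynomial, homogeneous of degree $k$. Distinct degrees cannot cancel, so $\Psi$ is nonconstant, $\Psi(0)=c$, and $q=\max I_s\le M$. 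The exponents involved lie in $L_{p'}$, since $s\in L_{p'}$ and $\rho,r_k\in L_p$.

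\emph{Choosing $v$.} Factor $\Psi$ in the polynomial UFD of finitely many jets, and pick $P$ irreducible of maximal total differential order $N$, with $\Psi=P^mQ$. Since $\Psi(0)=c\ne0$, $P$ is nonconstant. Apply Lemma~\ref{lem:branch} with $Q_1=Q$. We may add further inequations, such as nonvanishing of chosen leading coefficients, if they are needed below. This yields ${\red{\mathbb{E}}}$, $v$ with $P(v)=0$, $Q(v)\ne0$ and ${\red{\mathrm{d}}}P_v\ne0$. Then $\Psi(v)=0$, so the coefficient at $\DD^{-\rho}$ of the new residual vanishes, no coefficients lie above it, and the residual is zero or has $\rho'>\rho$. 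Lemma~\ref{lem:algebra} transfers all of this to $\Sf_{p'}({\red{\mathbb{E}}})$.

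\emph{Next slope and degree.} This is the main obstacle. Apply Lemma~\ref{lem:variation} to $Z'$, which gives data $I',r'_k$. Expand
\[
 B_k(Z';w\DD^{-\sigma})
\]
via $h(\DD,Z+v\DD^{-s}+tw\DD^{-\sigma})$ and compare with the Taylor expansion of $\Psi(v+tw)$ in $t$. At $\sigma=s$ the coefficient at $\DD^{-\rho}$ of $h(\DD,Z+(v+tw)\DD^{-s})$ is exactly $\Psi(v+tw)$. By the Leibniz/Taylor rule for jets, this equals $P(v+tw)^mQ(v+tw)$. Since $P(v)=0$, its $t$-expansion starts at order $\ge m$. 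The $t^m$ coefficient is $({\red{\mathrm{d}}}P_v(w))^mQ(v)$, which is nonzero as a formal polynomial in the jets of $w$ because ${\red{\mathrm{d}}}P_v\ne0$ and $Q(v)\ne0$.

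We then read off the new data:
\begin{itemize}
\item for $k<m$, we get $\ord B_k(Z';w\DD^{-s})<-\rho$, that is, $r'_k+ks>\rho$;
\item for $k=m$ we have equality, $r'_m+ms=\rho$;
\item for $k>m$, we have $r'_k+ks\ge\rho$.
\end{itemize}

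For the new slope $s'=\max_k(\rho'-r'_k)/k$, take $k=m$: $(\rho'-r'_m)/m=s+(\rho'-\rho)/m>s$. Hence $s'>s$.

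For the degree bound, compare any $k>m$ with $k=m$. The function $k\mapsto(\rho'-r'_k)/k$ satisfies $(\rho'-r'_k)/k\le s+(\rho'-\rho)/k<s+(\rho'-\rho)/m$. So all $k>m$ are strictly beaten by $k=m$, the maximizing set $I'_{s'}$ lies in $\{1,\ldots,m\}$, and the next leading polynomial has degree at most $m$.

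The delicate point is justifying that the $\DD^{-\rho}$-coefficient identities in $t$ give exact order information for each $B_k(Z';\cdot)$ separately. This requires the homogeneity in $t$ (degree $k$ in $w$) together with the covariance~\eqref{eq:covariance}, which transports the statement from $\sigma=s$ to arbitrary $\sigma$.
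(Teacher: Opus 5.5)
Your proposal is correct and follows the paper's proof essentially step for step. It uses the same decomposition $\Psi=c+\sum_{k}C_k$ by homogeneous degree, the same application of Lemma~\ref{lem:branch} to an irreducible factor of maximal differential order, and the same comparison of $r'_k+ks$ with $\rho$, which gives $(\rho'-r'_m)/m=s+(\rho'-\rho)/m>s$ and $(\rho'-r'_k)/k<s+(\rho'-\rho)/m$ for $k>m$. Extracting $t^k$-coefficients of $\Psi(v+tw)$ is the same as the paper's use of homogeneous $w$-components, and the one slip is cosmetic: $P$ is nonconstant because it is irreducible, whereas $\Psi(0)\ne0$ gives $P(0)\ne0$.
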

\begin{proof}
At every rational slope, all coefficients of $h(\DD,Z+u\DD^{-s})$ lie in ${\red{\mathbb{F}}}\{u\}$. Each is a finite polynomial, by the coefficient bounds in Lemma~\ref{lem:algebra}. Consequently the differential substitution $u\mapsto v+w$ into a field extension commutes with symbol evaluation; it never creates a coefficient where the universal polynomial was zero. No specialization of a differential fraction field is involved.

Formula~\eqref{eq:covariance} and the maximum defining $s$ show that all variations have order at most $-\rho$. With
$J=\{k\in I:r_k+ks=\rho\}$, which is nonempty, the leading coefficient is
\[
 \Psi(u)=c+\sum_{k\in J}C_k(u),
 \tag{5.1}\label{eq:leadingequation}
\]
where each $C_k$ is a nonzero homogeneous finite jet polynomial of ordinary degree $k$. Distinct degrees cannot cancel, so $\Psi$ is nonconstant and $q=\max J\leq M$.

Factor over the current field ${\red{\mathbb{F}}}$ in a finite jet ring, and select $P$ of greatest total differential order $N$. Every other factor, hence $Q$, belongs to $R_N={\red{\mathbb{F}}}[u_{ij}:i+j\leq N]$. The selected $P$ is irreducible in that ring: a factorization there would be a factorization in the original ring. Its exact multiplicity gives $P\nmid Q$, and $\Psi(0)=c\ne0$ implies $P(0),Q(0)\ne0$. Choose a variable on which $P$ depends. Its partial derivative $H$ is nonzero in characteristic zero, belongs to $R_N$, and has smaller ordinary degree than $P$, so $P\nmid H$. Lemma~\ref{lem:branch} applies to $P$ with the inequations $Q,H$. It yields $P(v)=0$, $Q(v)\ne0$, and a nonzero linearization
$L(w)={\red{\mathrm{d}}}P_v(w)$, since the coefficient given by $H(v)$ is nonzero.

The exact choice of $p'$ contains both the old lattice and $s$. The new symbol is therefore legal. Substitution $u\mapsto v$ kills its coefficient $\Psi(v)$ at $-\rho$ and leaves every higher coefficient zero. Thus the residual is zero or has strictly smaller order on the discrete lattice $L_{p'}$, giving $\rho'>\rho$.

For the quantitative assertion suppose this residual is nonzero. Expanding polynomials in their independent jet variables gives
\begin{align*}
 &P(v+w)=L(w)+\text{terms of degree at least two},\\
 &Q(v+w)=Q(v)+\text{terms of degree at least one}.
\end{align*}
Hence the lowest nonzero homogeneous degree of $\Psi(v+w)$ is exactly $m$, and its degree-$m$ term is
\[
 Q(v)L(w)^m\ne0.\tag{5.2}\label{eq:translateddegree}
\]
Nonvanishing uses the integral domain ${\red{\mathbb{E}}}\{w\}$.

Let $I',r'_k$ be the variation data at $Z'$. The identity
\[
 h(\DD,Z'+w\DD^{-s})=h(\DD,Z+(v+w)\DD^{-s})
\]
has no coefficients above $-\rho$, and its coefficient at $-\rho$ is $\Psi(v+w)$. The $k$th variation on the left is homogeneous of ordinary $w$-jet degree $k$, so uniqueness of homogeneous components shows individually that all its coefficients above $-\rho$ vanish. At $-\rho$ its coefficient is $[\Psi(v+w)]_k$. Formula~\eqref{eq:translateddegree} therefore gives, for $a_k=r'_k+ks$,
\[
 a_k\geq\rho\ (k\in I'),\qquad m\in I',\quad a_m=\rho,
 \qquad a_k>\rho\ (k\in I',\ k<m).
\]
Put $\epsilon=\rho'-\rho>0$ and
$\sigma_k=(\rho'-r'_k)/k=s+(\rho'-a_k)/k$. Then
\[
 \sigma_m=s+\frac\epsilon m>s,
 \qquad
 \sigma_k\leq s+\frac\epsilon k<s+\frac\epsilon m
 \quad(k>m).
\]
Thus $s'=\max_{k\in I'}\sigma_k>s$, and no index greater than $m$ attains this maximum. Applying~\eqref{eq:covariance} at $s'$ shows that the next leading polynomial consists of its nonzero constant term and the nonzero homogeneous components with $\sigma_k=s'$. Its degree is consequently at most $m$. This uses exact slope covariance at the updated $Z'$, not merely the translation identity at the old slope.
\end{proof}

\begin{lemma}\label{lem:orbit}
Let ${\red{\mathbb{F}}}$ be a characteristic-zero field containing all $e$th roots of unity, and let $\Psi\in {\red{\mathbb{F}}}[z_1,\ldots,z_N]$ be nonconstant of ordinary degree $q$, with $\Psi(0)\ne0$. Suppose $\Psi(\zeta z)=\Psi(z)$ whenever $\zeta^e=1$. If an irreducible factor $P$ has exact multiplicity $m$, then $q\geq em$.
\end{lemma}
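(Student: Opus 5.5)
The plan is to let the group $\mu_e$ of $e$th roots of unity act on ${\mathbb{F}}[z_1,\ldots,z_N]$ by $\zeta\cdot F(z)=F(\zeta z)$. This is a ring automorphism, so it permutes irreducible factors up to units, and it preserves exact multiplicities in $\Psi$ because $\Psi$ is fixed. The goal is to show that the orbit of $P$ contains at least $e$ pairwise non-associate irreducible factors of $\Psi$. Each of them has multiplicity $m$ and ordinary degree $\deg P\geq1$. Unique factorization in the polynomial ring (recalled at the start of Section~\ref{sec:branch}) then gives
\[
 q=\deg\Psi\geq e\,m\,\deg P\geq em.
\]

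The key step is that the stabilizer of $P$ up to associates is trivial. Suppose $P(\zeta z)=\lambda P(z)$ for some $\zeta\in\mu_e$ and unit $\lambda\in{\mathbb{F}}^\times$. Decompose $P=\sum_j P_j$ into homogeneous components; then $\zeta^jP_j=\lambda P_j$ for every $j$. Since $\Psi(0)\ne0$ and $P$ divides $\Psi$, we have $P(0)\ne0$, so $P_0\ne0$ and $\lambda=1$. Hence $\zeta^j=1$ for every $j$ with $P_j\ne0$.

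Now fix a primitive $e$th root $\omega$ and let $g$ be the gcd of $e$ and the occurring degrees of $P$. The associates of $P$ in the orbit are exactly those under $\omega^{k}$ with $e/g\mid k$. This means the orbit has exactly $e/g$ distinct classes, which is weaker than the target $e$ when $g>1$.

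This is the main obstacle. A plain orbit count only gives $q\geq (e/g)m$, and $g$ need not be $1$: for example $\Psi=1+z^e$ is itself irreducible-free of obstruction with $m=1$ and $q=e$. In that example the bound still holds, but only because $\deg P$ is large. So the count should combine both factors. The orbit length is $e/g$, and $P$ is a nonconstant polynomial all of whose occurring degrees are multiples of $g$. Since $P_0\ne0$, the top degree of $P$ is a positive multiple of $g$, so $\deg P\geq g$. Therefore
\[
 q\geq \frac eg\cdot m\cdot\deg P\geq \frac eg\cdot m\cdot g=em,
\]
which proves the claim.

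The care needed is in verifying that the distinct orbit elements are non-associate irreducible factors, each occurring with exact multiplicity $m$. Irreducibility is preserved because the action is an automorphism. The multiplicity claim follows from $\Psi(\zeta z)=\Psi(z)$ together with uniqueness of factorization.
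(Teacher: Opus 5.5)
Your final argument is correct and essentially the paper's: orbit--stabilizer for the $\mu_e$-action, with $P(0)\ne0$ forcing associate orbit factors to be equal, exact multiplicity $m$ for each orbit factor, and a stabilizer of order at most $\deg P$, giving $q\geq o\,m\deg P\geq em$. The paper gets $|\mathcal H|\le j\le\deg P$ from a single nonconstant monomial of degree $j$ and the root count, instead of computing the stabilizer exactly with a primitive root. You should delete your opening claims that the stabilizer is trivial and that the orbit has $e$ distinct classes, since both are false in general, and also drop the garbled $1+z^e$ aside.
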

\begin{proof}
Normalize $P(0)=1$, possible because every factor of $\Psi$ has nonzero constant term. This representative is unique within its associate class. The group ${\red{\mathcal G}}=\mu_e({\red{\mathbb{F}}})$ has exactly $e$ elements: $T^e-1$ splits and its derivative $eT^{e-1}$ is nonzero at every root. It acts by $\sigma_\zeta {\red{\Theta}}(z)={\red{\Theta}}(\zeta z)$. Let ${\red{o}}$ be the number of distinct orbit factors and ${\red{\mathcal H}}$ the stabilizer of $P$. The fibers of $\zeta\mapsto\sigma_\zeta P$ are precisely cosets of ${\red{\mathcal H}}$, so $e={\red{o}}|{\red{\mathcal H}}|$.

All orbit factors are irreducible, have the same degree $\ell=\deg P$, and are pairwise nonassociate because their constant terms are $1$. If $\Psi=P^mQ$ with $P\nmid Q$, invariance gives $\Psi=(\sigma_\zeta P)^m\sigma_\zeta Q$; applying the inverse substitution shows $\sigma_\zeta P\nmid\sigma_\zeta Q$. Hence every orbit factor has exact multiplicity $m$.

Choose a nonconstant monomial of $P$ of degree $j$, with $1\leq j\leq\ell$. Comparing its nonzero coefficient under $\sigma_\zeta P=P$ shows that every $\zeta\in {\red{\mathcal H}}$ satisfies $\zeta^j=1$. The root bound gives $|{\red{\mathcal H}}|\leq j\leq\ell$. Unique factorization implies that the product of the ${\red{o}}$ orbit factors, each raised to $m$, divides $\Psi$. Total degrees add in this polynomial domain, since the top homogeneous components have nonzero product. Therefore $q\geq {\red{o}}m\ell\geq {\red{o}}m|{\red{\mathcal H}}|=em$. All factorizations are over the stated field ${\red{\mathbb{F}}}$.
\end{proof}

For the Newton data in Lemma~\ref{lem:step}, put $e=p'/p$. If $k$ is active, so $ks=\rho-r_k\in L_p$, then
\[
 e\mid k.\tag{5.3}\label{eq:denominator}
\]
Indeed write $s=a/b$ in lowest terms, with $b=1$ if $s=0$. Then $e=b/\gcd(p,b)$. Integrality of $pka/b$ and $\gcd(a,b)=1$ imply $b\mid pk$ by Bezout's identity. Dividing $p,b$ by their greatest common divisor and using coprimality gives $e\mid k$. Thus every nonconstant homogeneous component in~\eqref{eq:leadingequation} has degree divisible by $e$, and the polynomial is invariant under simultaneous multiplication of its jets by an $e$th root of unity. These roots lie in ${\red{\Bbbk}}\subset {\red{\mathbb{F}}}$. Lemma~\ref{lem:orbit}, applied before any differential extension, gives $em\leq q$. This includes $e=1$ and $s=0$.

\begin{proposition}\label{prop:finite}
For every positive integer $T$ there are a commuting differential field ${\red{\mathbb{F}}}_T/{\red{\Bbbk}}$, an integer $1\leq p_T\leq M$, and a finite Laurent sum $Z_T\in\Sf_{p_T}({\red{\mathbb{F}}}_T)$ such that
\[
 \ord Z_T<A,\qquad \ord h(\DD,Z_T)\leq-T.
 \tag{5.4}\label{eq:precision}
\]
At most $MT$ corrections from Lemma~\ref{lem:step} are required.
\end{proposition}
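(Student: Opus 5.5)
We iterate Lemma~\ref{lem:step} starting from $Z=0$ over ${\red{\mathbb{F}}}={\red{\Bbbk}}$ with zero derivations and $p=1$. The residual of $Z=0$ is $h(\DD,0)=1$, so its leading term is $1\cdot\DD^{0}$ and $\rho=0$. If at some stage the residual vanishes, then \eqref{eq:precision} holds for every $T$ and we stop. Otherwise each step produces a new field, a new lattice $L_{p'}$, and a finite Laurent sum $Z'=Z+v\DD^{-s}$. Finiteness of the sum is clear inductively, since every correction is a single monomial.

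\textbf{Lattice bound.} Lemma~\ref{lem:step} says that the leading polynomial degree $q$ obeys $q'\leq m$. By \eqref{eq:denominator} and Lemma~\ref{lem:orbit} (applied over the current field, which contains the roots of unity from ${\red{\Bbbk}}$), we have $em\leq q$ with $e=p'/p$. Hence
\[
 p'q'\leq p'm=pem\leq pq .
\]
So the product $p\,q$ is nonincreasing along the iteration. Its initial value is at most $1\cdot M$, because the first leading polynomial has degree at most $M$ by Lemma~\ref{lem:step}. Since $q\geq1$ at every stage (the leading polynomial is nonconstant), we get $p\leq M$ throughout. This gives $1\leq p_T\leq M$.

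\textbf{Termination.} Each step raises $\rho$ strictly within the current lattice $L_{p'}\subseteq L_{\operatorname{lcm}}$. More usefully, $\rho'-\rho\geq 1/p'\geq 1/M$ because both exponents lie in $L_{p'}$. Starting from $\rho=0$, after at most $MT$ steps we have $\rho\geq T$, that is, $\ord h(\DD,Z)\leq -T$. This bounds the number of corrections by $MT$.

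\textbf{Order bound $\ord Z_T<A$.} The corrections have exponents $-s_1,-s_2,\ldots$, and the slopes increase strictly by Lemma~\ref{lem:step}. So every exponent is at most $-s_1$. The first slope, at $Z=0$ and $\rho=0$, is exactly $s_0$ of \eqref{eq:initialslope}, which satisfies $s_0\geq -A+1/M>-A$. Therefore every occupied exponent of $Z_T$ is $<A$.

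\textbf{Main obstacle.} The delicate point is the monotonicity $p'q'\leq pq$. It requires the invariance under $e$th roots of unity to be applied to the leading polynomial $\Psi$ over the field before the branch extension, with $m$ the multiplicity of the chosen factor of maximal differential order. It also requires $q'\leq m$ from Lemma~\ref{lem:step}. One must also check the base case: the first leading polynomial is $1+\sum_{k\in J}C_k$ with degree $\leq M$, and $p=1$ there.
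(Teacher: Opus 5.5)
Your proposal is correct and follows the paper's proof essentially step for step. The paper carries an auxiliary bound $b_j$ with $q_j\le b_j$ and $p_jb_j\le M$, setting $b_{j+1}=m_j$, where you track $pq$ directly; otherwise it uses the same $1/M$ step in $\rho$ and the same strictly increasing slopes starting from $s_0\ge -A+1/M$. One point needs stating explicitly: at the terminal correction with zero residual there is no $q'$, so $p'\le M$ must come from $p'm=pem\le pq\le M$ with $m\ge 1$, not from $q'\ge 1$ (the paper remarks on exactly this).
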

\begin{proof}
Start with ${\red{\mathbb{F}}}_0={\red{\Bbbk}}$, zero derivations, $p_0=1$, $Z_0=0$, $\rho_0=0$, and $b_0=M$. Every nonconstant word of $h$ contains $y$, so $h(\DD,0)=1$. At a nonterminal stage $j$ maintain a finite symbol $Z_j$ in a compatible field and nested lattice, a nonzero residual with leading exponent $-\rho_j$, and a positive integer $b_j$ such that
\[
 p_jb_j\leq M,\qquad \rho_j\geq j/M,\qquad
 1\leq q_j:=\deg\Psi_j\leq b_j.
 \tag{5.5}\label{eq:invariant}
\]
Stop if $\rho_j\geq T$. The leading polynomial and slope exist by Lemmas~\ref{lem:variation} and~\ref{lem:step}. Initially $q_0\leq M$ and the first slope satisfies $s_0\geq-A+1/M$ by~\eqref{eq:initialslope}.

When another correction is needed, set $p_{j+1}=\operatorname{lcm}(p_j,\operatorname{den}(s_j))$, and select a maximal-differential-order irreducible factor of $\Psi_j$ with multiplicity $m_j$. Over the current field the root-of-unity argument above gives
\[
 p_{j+1}m_j=p_j(p_{j+1}/p_j)m_j\leq p_jq_j\leq p_jb_j\leq M.
 \tag{5.6}\label{eq:pm}
\]
This bound is obtained before extending the field or testing the next residual; it applies also when that residual will be zero.

Lemma~\ref{lem:step} provides a compatible commuting differential field extension and a correction $v_j\DD^{-s_j}$. Let $Z_{j+1}=Z_j+v_j\DD^{-s_j}$ and $b_{j+1}=m_j$. The old lattice is contained in the new one and the field embedding is injective, so old operations and orders are preserved by Lemma~\ref{lem:algebra}. The new symbol is a sum of $j+1$ monomials. If its residual is zero, stop. Otherwise Lemma~\ref{lem:step} gives $\rho_{j+1}>\rho_j$, $s_{j+1}>s_j$, and $q_{j+1}\leq m_j=b_{j+1}$. Both $\rho_j,\rho_{j+1}$ lie in $L_{p_{j+1}}$, so
\[
 \rho_{j+1}-\rho_j\geq\frac1{p_{j+1}}\geq\frac1M.
\]
Here $p_{j+1}\leq M$ follows from~\eqref{eq:pm}. This proves every invariant in~\eqref{eq:invariant} at the next nonterminal stage.

This is a finite induction: unless an exact zero or the required precision occurs sooner, after $MT$ corrections $\rho_{MT}\geq T$. Let ${\red{j_*}}\leq MT$ be the stopping stage, and take ${\red{\mathbb{F}}}_T={\red{\mathbb{F}}}_{\red{j_*}}$, $p_T=p_{\red{j_*}}$, $Z_T=Z_{\red{j_*}}$. The last use of~\eqref{eq:pm} ensures $p_T\leq M$ even for a zero residual. The correction slopes are strictly increasing, so every nonzero correction has order at most
$-s_0\leq A-1/M<A$. Their finite sum has no larger order. The residual has order at most $-T$ at stopping, with $\ord0=-\infty$. This proves~\eqref{eq:precision}. No infinite iteration or replacement of the nested lattices by $M^{-1}\mathbb Z$ is used.
\end{proof}

\section{Realization and finite matrix compression}\label{sec:matrix}
Throughout this section ${\red{\mathbb{F}}}/{\red{\Bbbk}}$ has commuting derivations fixing ${\red{\Bbbk}}$, and $p\geq1$. Define
\[
 {\red{\mathcal I}}_p=\{(a,c)\in\mathbb Z^2:a+c\geq0,\ a+c\equiv0\pmod p\}.
\]
Rows are output indices and columns input indices. For $r\in L_p$ let $\Af_p(r)$ consist of ${\red{\mathbb{F}}}$-valued matrices indexed by ${\red{\mathcal I}}_p$ whose nonzero entries from $(a,c)$ to $(a',c')$ satisfy
\[
 a'-a\leq pr,\qquad c'-c\geq-pr.
 \tag{6.1}\label{eq:jumps}
\]
Put $\Af_p=\bigcup_{r\in L_p}\Af_p(r)$.

\begin{lemma}\label{lem:realization}
Matrix multiplication in $\Af_p$ is well-defined, with every entry of every fixed finite product a finite sum. There is a unital ${\red{\Bbbk}}$-algebra homomorphism
\[
 \varrho:\Sf_p({\red{\mathbb{F}}})\longrightarrow\Af_p
\]
such that $\ord U\leq r$ implies $\varrho(U)\in\Af_p(r)$.
\end{lemma}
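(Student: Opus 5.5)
The plan is to build $\varrho$ explicitly. It will be a Taylor-type representation in which $\DD^{1/p}$ acts as a shift on the index lattice ${\mathcal I}_p$ and a coefficient $a\in\mathbb F$ acts through its jets $\delta^i\eta^ja$. Then I check the relations encoded in~\eqref{eq:star}. I will think of the basis vector at $(a,c)\in{\mathcal I}_p$ as carrying two weights. The first coordinate records the $\DD$-degree, raised by at most $pr$ under a symbol of order $r$. The second coordinate records a dual Taylor degree, which can drop by at most $pr$. These are exactly the two jump conditions~\eqref{eq:jumps}.

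\emph{Well-definedness.} Take $X\in\Af_p(r)$ and $Y\in\Af_p(r')$. An entry of $XY$ from $(a,c)$ to $(a'',c'')$ sums over intermediate indices $(a',c')$ with
\[
 a'-a\leq pr',\qquad c'-c\geq -pr',\qquad a''-a'\leq pr,\qquad c''-c'\geq -pr.
\]
This gives $a+c\leq a'+c'$, an upper bound on $a'$ and an upper bound on $c'$. Hence $a'+c'$ lies in a bounded interval, and the index set is finite. The same bounds show that the product lies in $\Af_p(r+r')$. Induction on the number of factors handles finite products, and associativity follows because all sums are finite.

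\emph{Construction of $\varrho$.} I first define $\varrho$ on monomials $a\DD^{r}$. Let $S$ be the shift $(a,c)\mapsto(a+1,c-1)$, which preserves $a+c$. Let $S^{pr}$ realize $\DD^r$, with a diagonal weight factor chosen so that the commutation law $\DD^r\star b=\sum_j\binom rj\,\delta^j b\,\DD^{r-j}$ holds. This law follows from $\Delta^j\DD^r=(r)_j\DD^{r-j}$. Let the multiplication operator $\mathcal M_a$ send $(a,c)$ to the targets $(a-i,\,c+i+kp)$ with entry $\delta^i\eta^k a/(i!\,k!)$. This is the Taylor map, and it lies in $\Af_p(0)$. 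The $\eta$-jets are paired with the $c$-direction, so that left multiplication by $\mathcal M_a$ reproduces the extra $\eta(a)$ term in $\Delta$. Set $\varrho(a\DD^r)=\mathcal M_a\,S^{pr}$, and extend to upper-bounded series entrywise. Each fixed entry of $\varrho(U)$ receives contributions from finitely many exponents, by the same exponent-window argument as in Lemma~\ref{lem:algebra}, since $a'-a\leq pr$ forces $r$ into a bounded window for a fixed entry.

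\emph{Homomorphism property.} It suffices to prove $\varrho(a\DD^r\star b\DD^s)=\varrho(a\DD^r)\varrho(b\DD^s)$ for monomials, together with $\varrho(1)=I$ and $\Bbbk$-linearity. The general case then follows by the truncation argument already used in Lemma~\ref{lem:variation}: a fixed output entry depends only on finitely many monomials of each factor. For monomials the identity reduces to three facts.
\begin{itemize}
 \item $\mathcal M$ is multiplicative: $\mathcal M_a\mathcal M_b=\mathcal M_{ab}$, which is the Leibniz rule for the commuting derivations $\delta,\eta$.
 \item The exchange relation $S^{pr}\mathcal M_b=\sum_j(\text{coefficient})\,\mathcal M_{\delta^jb}\,S^{pr-pj}$ holds, with coefficients matching $\Delta^j(a\DD^r)/j!$ once the $\eta$-contribution is included.
 \item $S^{pr}S^{ps}=S^{p(r+s)}$, and the weights compose compatibly.
\end{itemize}

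The main obstacle is the exchange relation for fractional $r$. Here $\binom rj$ is not a finite-support binomial, and the $\eta$-twist in $\Delta$ must emerge from the diagonal weights on the $c$-coordinate. My first attempt is to choose the weights so that $S^{pr}$ acts on the second coordinate like $\tau^{-r}$ on Taylor monomials $\tau^{n}/n!$. The generalized Leibniz expansion $\partial^r(\tau^n b)$ then yields exactly $\sum_j (r)_j/j!\cdot\delta^j b\,\partial^{r-j}$. With $\eta$ acting as the degree operator, it also yields the $(ra+\eta a)$ combination of~\eqref{eq:star}. I would verify this first for $r=1/p$ and then for $r=-1/p$, and obtain all $r\in L_p$ by composing these, using associativity from Lemma~\ref{lem:algebra}. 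If the naive weights fail at negative exponents, I would use the positivity condition $a+c\geq0$ in ${\mathcal I}_p$ as a cutoff, where Taylor coefficients of negative degree are zero. I would check that this cutoff is consistent with the exchange relation, which is the boundary effect later confined by finite compression.
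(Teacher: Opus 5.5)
The part of your proposal that constructs $\varrho$ has a genuine gap, and the specific construction you propose cannot work.

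\textbf{Finiteness of products.} This part is fine in substance. The inequality $a+c\le a'+c'$ does not follow from \eqref{eq:jumps}: an entry from $(a,c)$ to $(a-p,c)$ is allowed in $\Af_p(0)$. What you actually have are upper \emph{and} lower bounds on both intermediate coordinates, namely $a''-pr\le a'\le a+pr'$ and $c-pr'\le c'\le c''+pr$, and these suffice.

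\textbf{Why your $\mathcal M_a$ fails.} Your $\mathcal M_a$ is convolution by the Taylor series of $a$ along two commuting shifts. So the $\mathcal M_a$ commute with one another, $\mathcal M_a\mathcal M_b=\mathcal M_{ab}$, and unitality forces $\varrho(a)=\mathcal M_a$. But coefficients do not $\star$-commute: $a\star b=ab+\eta(a)\delta(b)\DD^{-1}+\cdots$. Take $\mathbb F=\Bbbk(a,b)$ with $\delta a=0$, $\eta a=a$, $\delta b=1$, $\eta b=0$. Then $a\star b-b\star a=a\DD^{-1}$, which is central and invertible by Lemma~\ref{lem:twist}. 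Any unital homomorphism must therefore send $a,b$ to matrices with invertible commutator. No choice of diagonal weights on $S^{pr}$ changes this.

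\textbf{Further problems with the reduction.} Your three-fact reduction is inconsistent on its own terms. The exchange relation is forced by $\DD^r\star b=\sum_j\binom rj\,\delta^jb\,\DD^{r-j}$. This contains no $\eta$, because the coefficient of $\DD^r$ is $1$, so the $\eta$-twist in $\Delta$ cannot emerge there. Writing $a\DD^r\star b\DD^s=\sum_j\binom rj\,(a\star\delta^jb)\star\DD^{r+s-j}$ shows that it comes precisely from $a\star\delta^jb\ne a\,\delta^jb$. That is exactly the product you declared to be $\mathcal M_{a\,\delta^jb}$. Two further points:
\begin{enumerate}
\item With $\DD\mapsto S^p$, each $\delta$-jet must lower the first coordinate by $p$, not by $1$, for $[\DD,a]_\star=\delta a$ to be representable.
\item A cutoff producing boundary errors would not prove the lemma. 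The lemma asserts an exact homomorphism, and \eqref{eq:residualzero} uses $H(\varrho(U),\varrho(V))=\varrho(H(U,V))$ exactly.
\end{enumerate}

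\textbf{The missing idea.} Reverse the roles: let $\DD$ act by multiplication and the $\delta$-jets by a differential operator, so that coefficient operators fail to commute in exactly the right way. The paper works on $\mathbb F[z^{\pm1/p},w]$ with basis $z^{a/p}w^b$, where $c=pb-a$. It sends $\DD^r$ to multiplication $Z^r$ (an unweighted shift), $\eta$-jets to multiplication $W$ by $w$, and $\delta$-jets to $T_0=-\partial_z-z^{-1}\partial_w$. In other words, $a\mapsto\sum_{i,j}\frac{\delta^i\eta^ja}{i!\,j!}T_0^iW^j$, the normal ordering of the double Taylor series. The identity $M_BT_0-T_0M_B=M_{L_0B}$, where $L_0=\partial_z+z^{-1}\partial_w$ intertwines $\Delta$ under the Taylor map, then gives multiplicativity. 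In this setup:
\begin{enumerate}
\item $[W,T_0]=Z^{-1}$ supplies the $\eta(a)\DD^{r-1}$ term;
\item $[Z^r,T_0]=rZ^{r-1}$ handles fractional $r$ without any fractional derivatives;
\item the condition $a+c\ge0$ is just $b\ge0$, preserved exactly because $\partial_w$ kills $w^0$, with no boundary error.
\end{enumerate}
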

\begin{proof}
First consider a product $M_m\cdots M_1$, with $M_j\in\Af_p(r_j)$. A nonzero path with endpoints $\alpha_0=(a_0,c_0)$ and $\alpha_m=(a_m,c_m)$ has intermediate indices satisfying
\begin{align}
 a_m-p\sum_{{\red{\nu}}=j+1}^m r_{\red{\nu}}&\leq a_j\leq a_0+p\sum_{{\red{\nu}}=1}^j r_{\red{\nu}},\tag{6.2}\label{eq:patha}\\
 c_0-p\sum_{{\red{\nu}}=1}^j r_{\red{\nu}}&\leq c_j\leq c_m+p\sum_{{\red{\nu}}=j+1}^m r_{\red{\nu}}.\tag{6.3}\label{eq:pathc}
\end{align}
These follow by summing~\eqref{eq:jumps} along the first and last parts of the path. Each intermediate index lies in a finite integer rectangle, so there are only finitely many paths. Addition is closed after taking the larger bound, and summing~\eqref{eq:jumps} gives $\Af_p(r)\Af_p(s)\subseteq\Af_p(r+s)$. Associativity and distributivity regroup finite path sums. The identity belongs to $\Af_p(0)$; scalar matrices over ${\red{\mathbb{F}}}$ are central. Thus this is an ordinary unital matrix algebra with the stated finite-product property.

For the realization, first define the double Taylor map
\[
 {\red{\mathcal T}}(a)=\sum_{i,j\geq0}\frac{\delta^i\eta^j(a)}{i!j!}t^iw^j\in {\red{\mathbb{F}}}[[t,w]],
 \tag{6.4}\label{eq:taylor}
\]
where formal differentiation in $t,w$ holds the coefficient copy of ${\red{\mathbb{F}}}$ constant. The iterated Leibniz rule for commuting derivations gives
\[
 \delta^i\eta^j(ab)=\sum_{u=0}^i\sum_{v=0}^j
 \binom iu\binom jv\delta^u\eta^v(a)\delta^{i-u}\eta^{j-v}(b).
\]
Hence ${\red{\mathcal T}}$ is an injective unital algebra map (its constant coefficient is $a$), fixes ${\red{\Bbbk}}$, and satisfies ${\red{\mathcal T}}\delta=\partial_t{\red{\mathcal T}}$, ${\red{\mathcal T}}\eta=\partial_w{\red{\mathcal T}}$.

Extend ${\red{\mathcal T}}$ to symbol coefficients, writing $z$ for the symbol variable in these coordinates. Its values have the form
\[
 \widetilde U=\sum_{r\leq r_U}\sum_{i,j\geq0}u_{r,i,j}t^iw^jz^r,
 \qquad u_{r,i,j}=\frac{\delta^i\eta^j(u_r)}{i!j!}.
\]
Let $L_0=\partial_z+z^{-1}\partial_w$, with $\partial_z z^r=rz^{r-1}$. This commutes with $\partial_t$ and intertwines $\Delta$ under ${\red{\mathcal T}}$. On all expressions of the displayed upper-bounded type put
\[
 P\diamond Q=\sum_{n\geq0}\frac{L_0^nP\,\partial_t^nQ}{n!}.
 \tag{6.5}\label{eq:diamond}
\]
For fixed $z$-exponent $q$, a contribution uses $r+s-n=q$ with bounded-above $r,s$, so there are finitely many $n,r,s$. For each of these, any specified $t,w$ coefficient is a finite power-series calculation. Thus the expression is well-defined, and coefficient comparison gives
\[
 \widetilde{U\star V}=\widetilde U\diamond\widetilde V.
 \tag{6.6}\label{eq:intertwine}
\]

Use the Laurent polynomial space $E_0={\red{\mathbb{F}}}[z^{1/p},z^{-1/p},w]$ with basis $e_{a,b}=z^{a/p}w^b$, where $a\in\mathbb Z$, $b\geq0$. It is indexed by ${\red{\mathcal I}}_p$ on putting $c=pb-a$. On this space let $Z^r,W$ be multiplication by $z^r,w$ respectively, and set
\[
 T_0=-\partial_z-z^{-1}\partial_w.
\]
Define a normal-order map on monomials by
\[
 \mathcal Q(t^iw^jz^r)=T_0^iW^jZ^r.
 \tag{6.7}\label{eq:normalmap}
\]
The rightmost operator acts first. Each term of its application to $e_{a,b}$ is a scalar multiple of
\[
 z^{a/p+r-i}w^{b+j-\ell},\qquad
 0\leq\ell\leq i,\quad b+j-\ell\geq0.
 \tag{6.8}\label{eq:normalterms}
\]
Induction on $i$ proves this: the first summand of $T_0$ lowers just the $z$-exponent; the second lowers both exponents and kills a term with $w$-exponent zero rather than making it negative.

For a general upper-bounded expression $P$, define $\mathcal Q(P)$ entrywise by summing these matrices. Fix input $(a,b)$ and output $(a',b')$, and put $d_a=(a'-a)/p$. A contributing monomial must satisfy
\[
 r=d_a+i,\qquad j=b'-b+\ell,\qquad 0\leq\ell\leq i.
\]
The bound $r\leq r_P$ gives $0\leq i\leq r_P-d_a$; there are finitely many $i$, then finitely many $\ell$, and $r,j$ are determined. The entry is finite. Formula~\eqref{eq:normalterms} also gives
\[
 {\red{a'-a}}=p(r-i)\leq pr,\qquad
 {\red{c'-c}}=p(i+j-\ell-r)\geq-pr.
\]
Thus $\mathcal Q(P)\in\Af_p(r_P)$. The map is additive and ${\red{\mathbb{F}}}$-linear on Taylor-coordinate expressions, and sends $1$ to the identity.

We next check multiplication, including its extension to infinite expressions. If $B\in {\red{\mathbb{F}}}[w,z^{1/p},z^{-1/p}]$ and $M_B$ denotes multiplication by $B$, then the Leibniz rule gives
\[
 M_BT_0-T_0M_B=M_{L_0B}.
\]
Induction, multiplying on the right by $T_0$ and combining binomial coefficients, gives
\[
 M_BT_0^k=\sum_{n=0}^k\binom kn T_0^{k-n}M_{L_0^nB}.
\]
For $P=t^iB$ and $Q=t^kC$ one has
\[
 P\diamond Q=\sum_{n=0}^k\binom kn t^{i+k-n}(L_0^nB)C,
\]
so
\[
 \mathcal Q(P\diamond Q)
 =T_0^iM_BT_0^kM_C=\mathcal Q(P)\mathcal Q(Q).
 \tag{6.9}\label{eq:finiteordering}
\]
This proves the identity for finite sums.

To pass to full expressions, fix an entry and put $d_b=b'-b$. Two source monomials $t^iw^jz^r$ and $t^kw^lz^s$, with $r\leq r_P$, $s\leq r_Q$, have diamond-product terms of the form
\[
 \text{scalar}\cdot t^{i+k-n}w^{j+l-e}z^{r+s-n},
 \qquad 0\leq n\leq k,\quad0\leq e\leq\min(j,n).
\]
After normal ordering, a term reaching the fixed output satisfies
\[
 d_a=r+s-i-k,\qquad d_b=j+l-e-{\red{\beta}},
 \qquad0\leq {\red{\beta}}\leq i+k-n.
\]
In particular $e+{\red{\beta}}\leq i+k$, and
\[
 0\leq i+k\leq r_P+r_Q-d_a,\qquad
 0\leq j+l\leq d_b+i+k.
\]
The first bounds $i,k$; it then fixes $r+s$, so the two upper bounds give finite ranges for $r,s$ in $L_p$. The second bounds $j,l$. The remaining indices $n,e,{\red{\beta}}$ have finite ranges. Hence expansion of this entry of $\mathcal Q(P\diamond Q)$ uses finitely many source terms. For the product $\mathcal Q(P)\mathcal Q(Q)$, equations~\eqref{eq:patha}--\eqref{eq:pathc} give finitely many intermediate indices, and the entrywise argument after~\eqref{eq:normalterms} gives finitely many source terms for each entry. Both sides can therefore sum~\eqref{eq:finiteordering} using only finite rearrangements.

It follows from~\eqref{eq:intertwine} that $\varrho(U)=\mathcal Q(\widetilde U)$ preserves products. It is additive, unital, and fixes ${\red{\Bbbk}}$, because ${\red{\mathcal T}}$ does. The support assertion has already been proved. General images are regarded as elements of the explicitly defined algebra $\Af_p$, not as endomorphisms of the direct-sum space $E_0$; only the finite monomial operators were used on that space.
\end{proof}

\begin{proposition}\label{prop:compression}
Let $H\in {\red{\Bbbk}}\langle x,y\rangle$ have degree $D\geq1$, and suppose $U,V\in\Sf_p({\red{\mathbb{F}}})$ satisfy $\ord U,\ord V\leq R$ for an integer $R\geq1$ and $\ord H(U,V)\leq-T$ for an integer $T\geq1$. For every integer $1\leq S\leq T$ there exist $X_S,Y_S\in M_{pS^2}({\red{\mathbb{F}}})$ such that
\[
 \rank_{\red{\mathbb{F}}} H(X_S,Y_S)\leq4DRpS.
 \tag{6.10}\label{eq:rankcompression}
\]
\end{proposition}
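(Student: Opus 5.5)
The plan is to realize $U,V$ as infinite matrices via Lemma~\ref{lem:realization} and then compress to a finite index window. Put $\widehat U=\varrho(U)$ and $\widehat V=\varrho(V)$. Both lie in $\Af_p(R)$, and since $\varrho$ is a unital algebra homomorphism, $\varrho(H(U,V))=H(\widehat U,\widehat V)\in\Af_p(-T)$. By \eqref{eq:jumps}, a nonzero entry of an element of $\Af_p(-T)$ from $(a,c)$ to $(a',c')$ satisfies $a'-a\leq -pT$ and $c'-c\geq pT$. For the finite window I would take the set $W_S$ of indices $(a,c)\in{\red{\mathcal I}}_p$ with $0\leq a<pS$ and $0\leq c<pS$, subject to the congruence $a+c\equiv0\pmod p$. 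This set has exactly $pS\cdot S=pS^2$ elements, since for each $a$ there are $S$ admissible $c$. The positivity condition $a+c\geq0$ is automatic on $W_S$. Because $S\leq T$, any two indices of $W_S$ have $|a'-a|<pS\leq pT$, so the compression of $H(\widehat U,\widehat V)$ to $W_S$ is zero.

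Let $\Pi$ denote the coordinate projection onto $W_S$, and set $X_S=\Pi\widehat U\Pi$ and $Y_S=\Pi\widehat V\Pi$, viewed in $M_{pS^2}({\red{\mathbb{F}}})$. The key step is to compare, word by word, the compressed product $\Pi \widehat Z_m\Pi\cdots\Pi\widehat Z_1\Pi$ with $\Pi\widehat Z_m\cdots\widehat Z_1\Pi$, where each $\widehat Z_j$ is $\widehat U$ or $\widehat V$ and $m\leq D$. Their difference is a sum over paths that start and end in $W_S$ but visit an intermediate index outside $W_S$. By \eqref{eq:patha}--\eqref{eq:pathc}, such a path can leave the window only near its boundary. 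The coordinate $a$ of an intermediate index is at most $a_0+pjR$ and at least $a_m-p(m-j)R$, and $c$ obeys the analogous bounds. So if the output row $a_m$ satisfies $a_m<pS-pDR$, then $a_j<pS$ for all $j$. Likewise, if the input column has $c_0\geq pDR$, then every $c_j\geq0$.

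The main obstacle is that these bounds control only one side of each coordinate. The lower side for $a$ and the upper side for $c$ need the other endpoint: $a_j\geq a_m-p(m-j)R$ only gives $a_j\geq0$ when $a_m\geq pDR$, and $c_j\leq c_m+p(m-j)R$ needs $c_m<pS-pDR$. I would handle this by splitting the error so that it is a sum of matrices each supported in a strip of rows or a strip of columns of width $pDR$ in one coordinate. There are four such strips, namely near $a=0$, near $a=pS$, near $c=0$ and near $c=pS$. For example, every path whose row exits at the top has output row $a_m\geq pS-pDR$, so its contribution is confined to those output rows. A bottom exit ($a_j<0$) forces the input row to satisfy $a_0<pDR$, which confines the contribution to those input columns. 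Column exits are treated symmetrically.

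Each strip corresponds to at most $pDR\cdot S$ indices of $W_S$, because fixing $a$ in a range of length $pDR$ leaves $S$ admissible values of $c$. Summing over all words of $H$ (scalars and the constant term cause no error), $H(X_S,Y_S)=\Pi H(\widehat U,\widehat V)\Pi+E=E$, where $E$ is supported in the union of four row or column strips. Hence $\rank E\leq 4\,pDR\,S$, which is \eqref{eq:rankcompression}. The care needed is in checking that the window size and the strip widths match the congruence class counting exactly; if the bookkeeping is off by a factor, I would slightly shrink the strips using $m\leq D$ and $R\geq1$.
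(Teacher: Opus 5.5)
Your plan is the paper's: the same window $J$ with $|J|=pS^2$, and vanishing of the compressed residual because $|a'-a|<pS\le pT$ there. You also compare compressed and full products path by path, and use four strips of width $B=pDR$ with at most $BS$ window indices each.

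\textbf{The gap.} It lies in the step that carries the argument, namely which endpoint of a path each kind of exit constrains. For the $a$-coordinate you have it backwards. By \eqref{eq:jumps} only the increase $a_j-a_{j-1}\le pR$ is bounded; $a$ may drop arbitrarily in one step. So the bound $a_j\le a_0+pjR$ involves the input, and the bound $a_j\ge a_m-p(m-j)R$ involves the output. Consequently:
a top exit $a_j\ge pS$ forces $a_0>pS-pDR$, which is a column strip;
a bottom exit $a_j<0$ forces $a_m<pDR$, which is a row strip.

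Three of your assertions are therefore false: that a top exit forces $a_m\ge pS-pDR$, that a bottom exit forces $a_0<pDR$, and already that $a_m<pS-pDR$ keeps every $a_j<pS$. For a counterexample, take a word of length two and $c\equiv1\pmod p$ near $pS/2$. The path from input $(pS-1,c)$ to $(pS-1+pR,c)$ and then to output $(0,c-1)$ is compatible with \eqref{eq:jumps} and leaves the window. For $S$ large compared with $DR$, its row and column lie in none of your four strips. So the claimed support of $E$, and with it the rank bound, does not follow from what you wrote.

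\textbf{The repair.} Swap the two $a$-strips; your ``main obstacle'' paragraph already half-states this. Take the output set $J_{\mathrm{out}}=\{a<B\ \text{or}\ c\ge pS-B\}$ and the input set $J_{\mathrm{in}}=\{a\ge pS-B\ \text{or}\ c<B\}$ inside the window. These are exactly the paper's sets.

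If the input avoids $J_{\mathrm{in}}$ and the output avoids $J_{\mathrm{out}}$, all four one-sided bounds hold at once. Then every intermediate index stays in the window, uniformly for all words of length at most $D$. Hence every entry of $H(X_S,Y_S)$ whose row is outside $J_{\mathrm{out}}$ and whose column is outside $J_{\mathrm{in}}$ is zero.

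The splitting $Q=P_{\mathrm{out}}Q+(I-P_{\mathrm{out}})QP_{\mathrm{in}}$ then gives $\rank\le|J_{\mathrm{out}}|+|J_{\mathrm{in}}|\le4BS=4DRpS$. No shrinking of strips is needed: the problem was the row/column assignment, not a constant factor.
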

\begin{proof}
Put $N=pS$ and
\[
 J=\{(a,c):0\leq a,c<N,\ a+c\equiv0\pmod p\}\subset {\red{\mathcal I}}_p.
\]
For each of the $N$ choices of $a$, exactly $S$ choices of $c$ have the required residue, so $|J|=pS^2$. Write $M_J$ for the submatrix on $J$, and take $X_S=\varrho(U)_J$, $Y_S=\varrho(V)_J$.

The residual image has nonzero entries only if $a'-a\leq-pT$, by Lemma~\ref{lem:realization}. Within $J$ we instead have
$a'-a\geq-(pS-1)>-pS\geq-pT$. Therefore
\[
 \varrho(H(U,V))_J=0.\tag{6.11}\label{eq:residualzero}
\]

Set $B=DRp$ and define common input and output boundary sets
\begin{align*}
 J_{\mathrm{in}}&=\{(a,c)\in J:a\geq N-B\ \text{or}\ c<B\},\\
 J_{\mathrm{out}}&=\{(a,c)\in J:a<B\ \text{or}\ c\geq N-B\}.
\end{align*}
For a word of length $1\leq m\leq D$, write its matrix factors in action order as $M_m\cdots M_1$, each a copy of $\varrho(U)$ or $\varrho(V)$. The entry of the full product is the finite sum over all paths in ${\red{\mathcal I}}_p$, while the product of the compressed factors uses exactly those paths staying in $J$. If its input is outside $J_{\mathrm{in}}$ and output outside $J_{\mathrm{out}}$, then
\[
 a_0<N-B,\quad c_0\geq B,\quad a_m\geq B,\quad c_m<N-B.
\]
The jump bounds and $m\leq D$ give for every intermediate index
\begin{align*}
 a_j&\leq a_0+jRp\leq a_0+B<N,&
 a_j&\geq a_m-(m-j)Rp\geq a_m-B\geq0,\\
 c_j&\geq c_0-jRp\geq c_0-B\geq0,&
 c_j&\leq c_m+(m-j)Rp\leq c_m+B<N.
\end{align*}
It already has the congruence defining ${\red{\mathcal I}}_p$, so it lies in $J$. Hence the difference between the compressed product and the compression of the full product is supported in rows $J_{\mathrm{out}}$ or columns $J_{\mathrm{in}}$. For the empty word the difference is zero, since the identity compresses to the identity. These same sets work for every word. Summing the words of $H$, using the homomorphism and~\eqref{eq:residualzero}, shows that $H(X_S,Y_S)$ has this common support.

For fixed $a$ or fixed $c$, there are exactly $S$ complementary coordinates in $J$. Each of the four strips contains at most $B$ possible restricted coordinates, even if $B\geq N$, and hence at most $BS$ indices. Thus
$|J_{\mathrm{in}}|,|J_{\mathrm{out}}|\leq2BS$.
For a matrix $Q$ supported in those rows or columns, the coordinate projections $P_{\mathrm{out}},P_{\mathrm{in}}$ give
\[
 Q=P_{\mathrm{out}}Q+(I-P_{\mathrm{out}})QP_{\mathrm{in}}.
\]
The two summands have rank at most $|J_{\mathrm{out}}|$ and $|J_{\mathrm{in}}|$, respectively. Since the image of a sum lies in the sum of its images, $\rank Q\leq4BS=4DRpS$. Apply this to $H(X_S,Y_S)$. The count handles overlapping or full strips and does not multiply the bound by the number of words of $H$.
\end{proof}

\section{Descent and completion of the proof}\label{sec:final}
\begin{lemma}\label{lem:descent}
Let ${\red{\Bbbk}}$ be algebraically closed, ${\red{\mathbb{L}}}/{\red{\Bbbk}}$ any field extension, $f\in {\red{\Bbbk}}\langle x_1,\ldots,x_d\rangle$, $n\geq1$, and $0\leq r\leq n$ an integer. If a tuple in $M_n({\red{\mathbb{L}}})^d$ gives a value of rank at most $r$, then some tuple in $M_n({\red{\Bbbk}})^d$ does so.
\end{lemma}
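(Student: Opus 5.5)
The plan is to prove this by a Nullstellensatz argument: the rank condition is a system of polynomial equations over $\Bbbk$, and such a system with a solution in an extension field already has one in the algebraically closed field $\Bbbk$. Introduce $dn^2$ independent variables $z=(z^{(k)}_{uv})$ for the entries of a generic tuple $Z_1,\ldots,Z_d$. The entries of $F=f(Z_1,\ldots,Z_d)$ are then polynomials in $\Bbbk[z]$, because $f$ has coefficients in $\Bbbk$ and matrix evaluation uses only ring operations. The condition $\rank F\leq r$ is equivalent to the vanishing of all $(r+1)\times(r+1)$ minors of $F$. When $r=n$ there are no such minors and the statement is trivial, since any tuple over $\Bbbk$ (for instance the zero tuple) works. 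Otherwise the minors form a finite set $\Sigma\subset\Bbbk[z]$ of polynomials. I will use the fact that rank over a field is the largest size of a nonvanishing minor; this holds over any field, so it applies equally to $\mathbb L$ and to $\Bbbk$.

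The given tuple over $\mathbb L$ is a common zero of $\Sigma$ in $\mathbb L^{dn^2}$. Consequently the ideal $(\Sigma)\subseteq\Bbbk[z]$ is proper. Indeed, if $1=\sum g_\sigma\sigma$ with $g_\sigma\in\Bbbk[z]$, evaluating at the $\mathbb L$-point would give $1=0$ in $\mathbb L$. Here I use that evaluation at a point of $\mathbb L^{dn^2}$ is a ring homomorphism $\Bbbk[z]\to\mathbb L$ extending the inclusion $\Bbbk\subseteq\mathbb L$.

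Hilbert's weak Nullstellensatz over the algebraically closed field $\Bbbk$ then gives a common zero $a\in\Bbbk^{dn^2}$ of $(\Sigma)$. Reassembling $a$ into matrices $Z_1,\ldots,Z_d\in M_n(\Bbbk)$, every $(r+1)$-minor of $f(Z_1,\ldots,Z_d)$ vanishes, so its rank is at most $r$.

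The only substantive input is the weak Nullstellensatz. Since the paper has been elementary elsewhere, I would cite it as standard. If a self-contained argument were wanted, the route would be to take a maximal ideal containing $(\Sigma)$ and apply Zariski's lemma, which says that a finitely generated $\Bbbk$-algebra that is a field is algebraic over $\Bbbk$, hence equal to $\Bbbk$. That lemma is the one step requiring real work. Everything else is routine bookkeeping.
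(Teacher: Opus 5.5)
Your proposal is correct and follows essentially the same route as the paper: generic matrices over $\Bbbk$, the ideal of $(r+1)$-minors of the generic value, properness of that ideal via evaluation at the given $\mathbb{L}$-tuple, and the weak Nullstellensatz to produce a $\Bbbk$-point. The only difference is that the paper proves the Nullstellensatz (via Zariski's lemma and a maximal ideal) and the minor criterion for rank from scratch, whereas you cite them as standard.
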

\begin{proof}
We first derive the finite-variable ideal fact needed here: every proper ideal in ${\red{\Bbbk}}[t_1,\ldots,t_N]$ has a common zero in ${\red{\Bbbk}}^N$. An algebraically closed field is infinite, since otherwise $\prod_{c\in {\red{\Bbbk}}}(T-c)-1$ would be a nonconstant polynomial without a root.

We claim that a field ${\red{\mathbb{F}}}$ finitely generated as a ${\red{\Bbbk}}$-algebra equals ${\red{\Bbbk}}$. Write ${\red{\mathbb{F}}}={\red{\Bbbk}}[a_1,\ldots,a_m]$ and choose a maximal algebraically independent subcollection $u_1,\ldots,u_s$ of these generators. Every remaining generator $b_j$ is algebraic over ${\red{\mathbb{K}}}={\red{\Bbbk}}(u_1,\ldots,u_s)$, since transcendence over ${\red{\mathbb{K}}}$ would enlarge that independent subcollection. Choose monic polynomial equations for the finitely many $b_j$, and a common nonzero polynomial denominator $q\in {\red{\Bbbk}}[u_1,\ldots,u_s]$ for their coefficients. Put ${\red{\mathcal R}}={\red{\Bbbk}}[u_1,\ldots,u_s,q^{-1}]$. Reduction of powers by these monic equations shows that ${\red{\mathcal R}}[b_1,\ldots,b_e]$ has a finite ${\red{\mathcal R}}$-module spanning list, consisting of products of bounded powers of the $b_j$ and including $1$. Moreover
\[
 {\red{\mathbb{F}}}={\red{\Bbbk}}[u_1,\ldots,u_s,b_1,\ldots,b_e]
 ={\red{\mathcal R}}[b_1,\ldots,b_e],
\]
because ${\red{\mathcal R}}\subset {\red{\mathbb{F}}}$.

If $v_1,\ldots,v_{\red{n_0}}$ is such a spanning list and $z\in {\red{\mathbb{F}}}$, write $zv_i=\sum_jc_{ij}v_j$ with $c_{ij}\in {\red{\mathcal R}}$. Multiplication by the adjugate of $zI-C$ shows that $\det(zI-C)v_i=0$ for all $i$. One $v_i$ is $1$, so this gives a monic equation for $z$ over ${\red{\mathcal R}}$. Apply it to $z=a^{-1}$ for a nonzero $a\in {\red{\mathcal R}}$, and multiply an equation of degree ${\red{n_0}}$ by $a^{{\red{n_0}}-1}$ to express $a^{-1}$ as an element of ${\red{\mathcal R}}$. Thus ${\red{\mathcal R}}$ is a field.

If $s\geq1$, view $q$ as a polynomial in $u_2,\ldots,u_s$ with coefficients in ${\red{\Bbbk}}[u_1]$. A nonzero such coefficient has finitely many roots, so there is $c\in {\red{\Bbbk}}$ such that $q(c,u_2,\ldots,u_s)\ne0$. The element $u_1-c$ cannot be invertible in ${\red{\mathcal R}}$: an inverse would be ${\red{\pi}}/q^\ell$ for a polynomial ${\red{\pi}}$ and $\ell\geq0$, giving $(u_1-c){\red{\pi}}=q^\ell$. Substitution $u_1=c$ gives zero on the left and a nonzero polynomial (or $1$ if $\ell=0$) on the right. This contradicts the field property. Therefore $s=0$. Every $a_i$ is algebraic over the algebraically closed ${\red{\Bbbk}}$, and its polynomial splits into linear factors; since ${\red{\mathbb{F}}}$ is a field, one factor vanishes. Thus each $a_i\in {\red{\Bbbk}}$ and ${\red{\mathbb{F}}}={\red{\Bbbk}}$.

For a proper ideal ${\red{\mathfrak a}}$ in ${\red{\mathcal P}}={\red{\Bbbk}}[t_1,\ldots,t_N]$, choose a maximal ideal $\mathfrak m$ containing it. Such an ideal exists by the maximal principle: a union of a chain of proper ideals containing ${\red{\mathfrak a}}$ is still proper, because otherwise one member would contain $1$. The field ${\red{\mathcal P}}/\mathfrak m$ contains ${\red{\Bbbk}}$ injectively and is generated over it by the finitely many coordinate images. The preceding assertion identifies it with ${\red{\Bbbk}}$. Those images form a tuple $a\in {\red{\Bbbk}}^N$ at which every member of ${\red{\mathfrak a}}$ vanishes. For $N=0$ the statement says that the proper ideal of ${\red{\Bbbk}}$ is zero and the tuple is empty.

We also record the matrix criterion: for a matrix $B$ over any field and an integer $0\leq r<n$, rank at most $r$ is equivalent to vanishing of all $(r+1)$-minors. A nonzero such minor gives $r+1$ independent columns. Conversely, choose $r+1$ independent columns if rank exceeds $r$. Their rows span the dual of the $(r+1)$-dimensional column-coordinate space: otherwise a nonzero vector annihilating their proper row span would give a nonzero kernel vector for those columns. Choose $r+1$ rows forming a basis of that dual to obtain a nonzero square minor. This proves the criterion, including $r=0$.

If $r=n$, any tuple over ${\red{\Bbbk}}$, for example the zero tuple, satisfies the desired rank bound. Suppose $r<n$ and form generic matrices $T_i=(t_{i,ab})$ over the finite polynomial ring ${\red{\mathcal P}}={\red{\Bbbk}}[t_{i,ab}]$ on $dn^2$ commuting variables. Each entry of $f(T_1,\ldots,T_d)$ is a polynomial in ${\red{\mathcal P}}$: a word's entry is a finite sum over intermediate matrix indices of the ordered products of entries, and the empty word contributes the identity. Thus the order of the free letters is preserved even though their matrix-entry coordinates commute.

Let ${\red{\mathfrak a}}$ be the ideal generated by the finitely many $(r+1)$-minors of this generic value. Evaluation at the given ${\red{\mathbb{L}}}$-tuple is a unital map ${\red{\mathcal P}}\to {\red{\mathbb{L}}}$ that kills these minors, by the rank criterion. Hence ${\red{\mathfrak a}}$ is proper. The ideal fact just proved supplies a common ${\red{\Bbbk}}$-point. Specializing the $T_i$ at this point gives matrices over ${\red{\Bbbk}}$ whose evaluated value has all these minors zero; the rank criterion gives rank at most $r$. No finiteness or algebraicity condition on ${\red{\mathbb{L}}}/{\red{\Bbbk}}$ was used.
\end{proof}

\begin{proof}[Proof of Theorem~\ref{thm:main}]
Fix the original ${\red{\Bbbk}},d,f$. If the scalar-zero branch of Lemma~\ref{lem:reduction} occurs, $r_f(1)=0$, so the required infimum is zero. Otherwise use its normalized polynomial $h$ and fixed integers $D,A,M$, with $D\geq2$ and $A,M\geq1$.

For a positive integer $T$, Proposition~\ref{prop:finite} supplies ${\red{\mathbb{F}}}_T,p_T,Z_T$ with~\eqref{eq:precision}. In Proposition~\ref{prop:compression} take
\[
 U=\DD,\qquad V=Z_T,\qquad H=h,\qquad R=A,\qquad S=T.
\]
The symbol $\DD$ belongs to the common lattice and has order $1\leq A$, while $\ord Z_T<A$. The field has the required commuting derivations fixing ${\red{\Bbbk}}$, the degree is $D$, and the residual has order at most $-T$. Hence there are matrices over ${\red{\mathbb{F}}}_T$ of size
\[
 n_T=p_TT^2\geq1
\]
at which the rank of $h$ is at most $q_T=4DAp_TT$. Set $r_T=\min(n_T,q_T)$, an integer in $[0,n_T]$. The same value has rank at most $r_T$. Lemma~\ref{lem:descent} therefore gives matrices $X'_T,Y'_T\in M_{n_T}({\red{\Bbbk}})$ with
\[
 \rank_{\red{\Bbbk}} h(X'_T,Y'_T)\leq r_T\leq4DAp_TT.
\]
Only this finite rank condition is descended; the differential field and its symbolic equations need not descend.

For $1\leq i\leq d$, set $W_{i,T}=X'_T(Y'_T)^i$. These are permitted matrices in the original minimum, and Lemma~\ref{lem:reduction} gives
\[
 0\leq\frac{r_f(n_T)}{n_T}
 \leq\frac{\rank_{\red{\Bbbk}} f(W_{1,T},\ldots,W_{d,T})}{n_T}
 =\frac{\rank_{\red{\Bbbk}} h(X'_T,Y'_T)}{n_T}
 \leq\frac{4DA}{T}.
\]
The integers $D,A$ depend only on the fixed input and its reduction. For every real $\varepsilon>0$, choose an integer $T>4DA/\varepsilon$. The displayed ratio is then less than $\varepsilon$. All possible normalized ranks are nonnegative, so their infimum is zero. This is the exact stated infimum over positive sizes; no assertion about a finite attained zero or a bound at every size is required.
\end{proof}

\section*{Lean formalization}\label{app:formalization}
We formalize\footnote{The lean  code is available at \\ \url{https://github.com/MechMath/MakarLimanov-formalization}.} the asymptotic Makar-Limanov rank conjecture established in Theorem~\ref{thm:main} in Lean 4~\cite{moura2021lean}, using Mathlib~\cite{mathlib}. The main theorem uses only Lean's standard axioms \texttt{propext}, \texttt{Classical.choice}, and \texttt{Quot.sound}, with no \texttt{sorry} or additional axioms.
Theorem~\ref{thm:main} is formalized as following:
\begin{leancode}
theorem makarLimanov
    (K : Type*) [Field K] [IsAlgClosed K] [CharZero K](d : ℕ) : ∀ f : FreePoly K d, Nonconstant f → sInf {r : ℝ | ∃ n : ℕ, 0 < n ∧ r = (minRank f n : ℝ) / n} = 0 := by ...
\end{leancode}

\subsection*{\red{Declaration of generative AI}}
\red{The main results presented in this paper are obtained through an interaction between the authors and an artificial intelligence agent system, MechMath Agent Team (MMAT) \cite{cao2026mechmathagentteamllm}. GPT-6 Astra was used during this work to assist in exploring proof strategies and revising the exposition. The authors assume full responsibility for all proofs and mathematical details and for the mathematical correctness of the final results.}

\bibliographystyle{cas-model2-names}
\red{\bibliography{refs}}
\end{document}